%% file: non-ori-3d-4d.tex
\documentclass[11pt]{amsart}

\usepackage{graphicx}
\usepackage[usenames,dvipsnames]{xcolor}
\usepackage{tikz-cd}
\usepackage{pinlabel}
\usepackage[pdftex,%
  final,%
  colorlinks=true,%
  linkcolor=NavyBlue,%
  citecolor=NavyBlue,%
  filecolor=NavyBlue,%
  menucolor=NavyBlue,%
  urlcolor=NavyBlue,%
  bookmarks=true,%
  bookmarksdepth=3,%
  bookmarksnumbered=true,%
  bookmarksopen=true,%
  bookmarksopenlevel=2,%
]{hyperref}

\newcommand{\dfn}[1]{{\textbf{#1}}}
\numberwithin{equation}{section}

\newcommand{\rr}{\ensuremath{\mathbb{R}}}
\newcommand{\zz}{\ensuremath{\mathbb{Z}}}
\newcommand{\qq}{\ensuremath{\mathbb{Q}}}

\newcommand{\nn}{\ensuremath{\mathbb{N}}}

\theoremstyle{plain}
\newtheorem{thm}{Theorem}[section]
\newtheorem{cor}[thm]{Corollary}
\newtheorem{lem}[thm]{Lemma}

\newtheorem{prop}[thm]{Proposition}
\newtheorem{conj}[thm]{Conjecture}

\theoremstyle{definition}
\newtheorem{defn}[thm]{Definition}

\theoremstyle{remark}
\newtheorem{rem}[thm]{Remark}
\newtheorem{ex}[thm]{Example}

\newcommand{\sg}{\ensuremath{d}}
\newcommand{\asg}{\ensuremath{\overline{\gamma}}}
\newcommand{\spanning}{\ensuremath{\mathcal{F}}}
\newcommand{\candidate}{\ensuremath{\mathcal{C}}}
\newcommand{\ep}{\ensuremath{\Lambda}}
\newcommand{\bep}{\ensuremath{\tilde{\Lambda}}}
\DeclareMathOperator{\lk}{lk}

\DeclareMathOperator{\wri}{wr}

\DeclareMathOperator{\twist}{tw}

\begin{document}

\title[A Refinement of the Spanning Surface Defect]{A Refinement of the Spanning Surface Defect in $3$ and $4$ Dimensions}

\author[J. Knihs]{Julia Knihs} \address{Haverford College,
Haverford, PA 19041} \email{jknihs0401@gmail.com} 

\author[J. Patel]{Jeanette Patel} \address{Haverford College,
Haverford, PA 19041} \email{jpatel@haverford.edu} 

\author[T. Rugg]{Thea Rugg} \address{Cornell University, Ithaca, NY 14853} \email{dgr77@cornell.edu}

\author[J. Sabloff]{Joshua M. Sabloff} \address{Haverford College,
Haverford, PA 19041} \email{jsabloff@haverford.edu} 

\date{\today}

\begin{abstract}
The spanning surface defect uses spanning surfaces of a knot in the $3$-sphere to measure how far a knot is from being alternating.  We refine the spanning surface defect and extend the definition to take into account surfaces in the $4$-ball.  We use these extensions to make comparisons between the $3$- and $4$-dimensional settings, to reframe non-orientable slice-torus bounds on the non-orientable $4$-genus, and to prove a connected sum formula.
\end{abstract}

\maketitle

% ****************************************

\setcounter{tocdepth}{1}
\tableofcontents

% ********************
\section{Introduction}
\label{sec:intro}

\input{intro}

% ********************
\section{Spanning Surfaces in $3$ Dimensions}
\label{sec:geography3}

\input{geography3}

% ********************
\section{Fillings in $4$ Dimensions}
\label{sec:geography4}

\input{geography4}

% ********************
\section{Motivation from Torus Knots}
\label{sec:torus}

\input{torus}

% ********************
\section{Ordinary Genus Gaps for Pretzel Knots}
\label{sec:gap}

\input{gap}

% ********************
\section{Spanning Surface Defect Gaps for Slice Pretzel Knots}
\label{sec:stable-gap-pretzel}

\input{stable-gap-pretzel}

% ********************
\section{Spanning Surface Defect Gaps via Connected Sums}
\label{sec:stable-gap-sum}

\input{stable-gap-sum}

% ****************************************
\bibliographystyle{amsplain} 
\bibliography{main}

\end{document}

%% file: intro.tex
%!TEX root = non-ori-3d-4d.tex

Surfaces that span a knot in the $3$-sphere --- that is, compact embedded surfaces whose boundary is a given knot --- are important tools in understanding the topology of the complement of a knot.  For example, one might seek to understand the minimal genus of an orientable --- or non-orientable --- spanning surface of a knot, or to enumerate the set of possible slopes that spanning surfaces imprint on the boundary of a tubular neighborhood of a knot, or even to characterize whether a knot has an alternating diagram using particularly nice spanning surfaces \cite{greene:alternating, howie:alternating}.  In extending the ideas of Greene's and Howie's characterizations of alternating knots to a wider class of knots, Ito \cite{ito:almost-alternating} defined the spanning surface defect to measure how far a knot is from being alternating.  The goal of this paper is to refine this measurement and to extend it to $4$ dimensions.

%%%%%
\subsection{Spanning Surfaces, Fillings, and the Defect}

To define and refine the spanning surface defect of a knot $K \subset S^3$, we begin by setting notation.  A compact smooth embedded surface $F \subset S^3$ with $\partial F = K$ is a \dfn{spanning surface} of $K$.  Let $\spanning_3(K)$ be the set of all spanning surfaces of $K$. Define the \dfn{non-orientable $3$-genus}, or \dfn{crosscap number}, of $K$ to be
\[\gamma_3(K) = \min\{b_1(F)\,:\, F \in \spanning_3(K)\}.\]
This quantity was first introduced by Clark \cite{clark:crosscap}, and has been computed for torus knots \cite{teragaito:torus}, $2$-bridge knots \cite{ht:2-bridge-crosscap}, pretzel knots \cite{im:pretzel} and others; further, there exists an algorithm for determining $\gamma_3$ for alternating knots \cite{ak:alternating} and more general algorithms that use normal surface theory \cite{bo:crosscap-compute}.

Similarly, a compact smooth properly embedded surface $F \subset B^4$ with $\partial F = K$ is a \dfn{filling} of $K$. Let $\spanning_4(K)$ be the set of all fillings of $K$. The \dfn{non-orientable $4$-genus} of a knot $K$ is
\[  \gamma_4(K) = \min\{b_1(F)\,:\, F \in \spanning_4(K)\}.\]
Note that $\gamma_4(K) \leq \gamma_3(K)$. The non-orientable $4$-genus was first studied by Viro \cite{viro:positioning} and Yasuhara \cite{yasuhara:connecting}.  Efforts at understanding $\gamma_4$ use tools from algebraic topology \cite{gl:non-ori-4-genus, gl:signature} and from modern invariants such as Heegaard Floer homology \cite{allen:geography, batson:non-ori-slice, bkst:non-ori-genus, gm:non-ori-genus, oss:unoriented}, Khovanov homology \cite{ballinger:concordance-kh}, and gauge theory \cite{ds:cs-clasp}; see \cite{sato:unori-slice-torus} and Section~\ref{sec:geography4} below for a framework for many modern invariants.

A non-orientable filling $F$ has a second interesting homological invariant beyond its first Betti number, namely the Euler class of its normal bundle, termed  the \textbf{normal Euler number} $e(F) \in 2\zz$.  The normal Euler number of a spanning surface is defined to be the normal Euler number of the filling that results from pushing the interior of the spanning surface into $B^4$; note that the normal Euler number of a spanning surface is the \emph{negative} of its \dfn{boundary slope}. See Sections~\ref{sec:geography3} and \ref{sec:geography4} for the precise definitions of the normal Euler number, boundary slope, and other foundational concepts.

We are now ready to define our central object of study, the spanning surface defect.  In fact, we will define several flavors of the defect.  First, we normalize the genus of a spanning surface or filling $F$ of $K$ using the normal Euler number of $F$ and the signature of $K$.  In particular, for a spanning surface or filling $F$ of $K$, define its \dfn{Euler-normalized first Betti numbers} to be 
\[\Gamma^{\pm}(F) = b_1(F) \pm \left(\sigma(K)-\frac{1}{2}e(F)  \right),\]
where $\sigma(K)$ is the signature of $K$.  Note that the quantity $\sigma(K) - \frac{1}{2}e(F)$ is the signature of the Gordon-Litherland form of $F$; see Section~\ref{ssec:signature}, below.

A key feature of $\Gamma^+(F)$ (resp.\ $\Gamma^-(F)$) is that it is invariant under the \dfn{addition of a $+$-twisted band} (resp.\ a $-$-twisted band), as in Figure~\ref{fig:twisted-band}.  Such a band addition raises the first Betti number by $1$ and changes the normal Euler number by $\pm 2$, while leaving the isotopy class of the boundary unchanged.  We denote by $\twist_{\pm}(F)$ the addition of a $\pm$-twisted band to $F$.  This feature of $\Gamma^\pm(F)$ is the origin of the moniker ``Euler-normalized''.

\begin{figure}
\labellist
\small\hair 2pt
 \pinlabel {$\twist_+$} [l] at 114 55
 \pinlabel {$\twist_-$} [r] at 191 55
\endlabellist

\begin{center}
\includegraphics{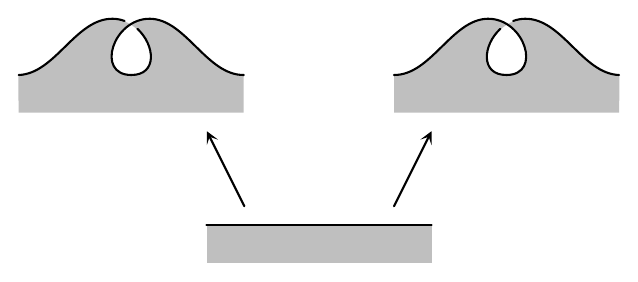}
\caption{The addition of $\pm$-twisted band to a spanning surface or filling $F$ of $K$ raises the first Betti number by $1$ and changes the normal Euler number by $\pm 2$, but leaves invariant $\Gamma^\pm(F)$ and the isotopy class of the boundary. The sign convention for the normal Euler number means that the sign of the crossing is the opposite of the sign of the twist.}
\label{fig:twisted-band}
\end{center}
\end{figure}

As we shall see in Section~\ref{ssec:signature}, a second important feature of the Euler-normalized first Betti number is that it is non-negative, and hence it makes sense to minimize over all spanning surfaces or fillings.

\begin{defn} \label{defn:normalized-genus}
	The \dfn{$\pm$-spanning surface defect in dimension $n$} of a knot $K$, for $n=3,4$, is defined to be 
	\[\sg^\pm_n(K) = \min\{\Gamma^\pm(F)\;:\; F \in \spanning_n(K)\}.\]
\end{defn}

It is straightforward to check that $\frac{1}{2}(\sg_3^+(K)+\sg_3^-(K))$ coincides with Ito's original definition of the spanning surface defect \cite{ito:almost-alternating}:
\[\sg_3(K) = \frac{1}{2} \min \left\{b_1(F) + b_1(G) -  |\sigma_{gl}(F) - \sigma_{gl}(G)| \;:\; F,G \in \spanning_3(K)\right\},\]
where $\sigma_{gl}$ is the signature of the Gordon-Litherland form of a spanning surface.  Ito's motivation for defining $d_3$ is encapsulated by the following theorem, which we refine slightly in light of Definition~\ref{defn:normalized-genus}.

\begin{thm}[\cite{greene:alternating, howie:alternating} via \cite{ito:almost-alternating}] \label{thm:alternating}
A knot $K$ is alternating if and only if $\sg^+_3(K) = 0 = \sg^-_3(K)$.
\end{thm}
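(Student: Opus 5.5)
We will derive the theorem from the Greene--Howie characterization: a knot $K$ is alternating if and only if it bounds a positive-definite spanning surface $F$ and a negative-definite spanning surface $G$. Here definiteness refers to the Gordon--Litherland form. The work is to translate definiteness into the vanishing of $\Gamma^\pm$.

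The first step is the identity quoted from Section~\ref{ssec:signature}: for any spanning surface $F$, the Gordon--Litherland form has rank $b_1(F)$ and signature $\sigma_{gl}(F) = \sigma(K) - \frac{1}{2}e(F)$. Therefore
\[ \Gamma^+(F) = b_1(F) + \sigma_{gl}(F), \qquad \Gamma^-(F) = b_1(F) - \sigma_{gl}(F). \]
Since $|\sigma_{gl}(F)| \le b_1(F)$, both quantities are non-negative, so each $\sg_3^\pm(K)$ is a minimum over non-negative integers. Moreover, $\Gamma^-(F) = 0$ exactly when $\sigma_{gl}(F) = b_1(F)$, that is, when the Gordon--Litherland form of $F$ is positive definite. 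Likewise $\Gamma^+(F) = 0$ exactly when the form is negative definite. A surface with $b_1(F)=0$ is a disk, which counts as both; this is harmless, since only the unknot bounds one and it is alternating.

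For the forward direction, suppose $K$ is alternating and take a reduced alternating diagram. Its two checkerboard surfaces $B$ and $W$ have definite Gordon--Litherland forms of opposite signs; this is the classical Gordon--Litherland computation, and it is also the easy direction of Greene--Howie. By the first step, one of $\Gamma^+$, $\Gamma^-$ vanishes on $B$ and the other on $W$. Non-negativity then gives $\sg_3^+(K) = 0 = \sg_3^-(K)$.

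For the converse, suppose $\sg_3^+(K) = 0 = \sg_3^-(K)$. Since the minima are attained, there are spanning surfaces $F$ and $G$ with $\Gamma^-(F) = 0$ and $\Gamma^+(G) = 0$. By the first step, $F$ is positive definite and $G$ is negative definite. The theorem of Greene and Howie then says $K$ is alternating.

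The only genuine content lies in the first step, namely the identification $\sigma_{gl}(F) = \sigma(K) - \frac{1}{2}e(F)$ together with the sign conventions linking $e(F)$ to the boundary slope. This is the point where a sign error could swap $\Gamma^+$ and $\Gamma^-$. Such a swap would not affect the statement, because the theorem is symmetric in $+$ and $-$. We would also remark that this argument recovers Ito's formulation: $\sg_3(K) = \frac{1}{2}(\sg_3^+(K) + \sg_3^-(K))$ is zero if and only if both terms are zero, since both are non-negative.
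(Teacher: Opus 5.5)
Your argument is correct and follows the route the paper intends. The paper gives no separate proof and simply attributes the theorem to Greene and Howie via Ito; its refinement is exactly your observation that, by Proposition~\ref{prop:gl-sign-sign-euler}, $\Gamma^\mp(F) = b_1(F) \mp \sigma_{gl}(F)$ vanishes precisely when $F$ is positive (resp.\ negative) definite, or equivalently that Ito's $\sg_3 = \frac{1}{2}(\sg_3^+ + \sg_3^-)$ vanishes if and only if both non-negative terms do. Going directly through Greene's definite-surface characterization rather than through Ito's $\sg_3$ is only a cosmetic difference.
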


Ito further shows that the spanning surface defect is a lower bound on the alternating genus of a knot, and hence on the Turaev genus.  The Turaev genus $g_T$ was developed by Turaev to study the Jones polynomial and the Tait conjectures, and was formalized in \cite{dfkls:jones}.  As in Theorem~\ref{thm:alternating}, the Turaev genus of $K$ vanishes if and only if $K$ is alternating.  We obtain the following bound.

\begin{thm}[\cite{ito:almost-alternating} and \cite{lowrance:alt-dist}] \label{thm:turaev}	
	For any knot $K$, $\sg_3(K) \leq g_T(K)$.
\end{thm}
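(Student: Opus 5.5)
The bound follows from the Turaev surface of a diagram, which produces two spanning surfaces whose Gordon--Litherland signatures are fixed by the Goeritz/Traczyk formula. Fix a diagram $D$ of $K$ realizing the Turaev genus, with $c$ crossings. Let $s_A$ and $s_B$ be the numbers of circles in the all-$A$ and all-$B$ Kauffman states. Recall the Turaev genus of a diagram,
\[ g_T(D) = \tfrac12\bigl(2 + c - s_A - s_B\bigr). \]
Let $F_A$ and $F_B$ be the state surfaces of $D$: each is built from disks bounded by the state circles, joined by half-twisted bands at the crossings. These are the checkerboard surfaces in the alternating case, but they are spanning surfaces of $K$ in general. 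Each is connected since $D$ is connected, so
\[ b_1(F_A) = 1 - s_A + c, \qquad b_1(F_B) = 1 - s_B + c, \]
and hence
\[ b_1(F_A) + b_1(F_B) = 2 + 2c - s_A - s_B = 2g_T(D) + c. \]

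Since $\sg_3(K) = \frac12(\sg_3^+(K) + \sg_3^-(K))$ equals Ito's minimum, it suffices to exhibit $F, G \in \spanning_3(K)$ with
\[ b_1(F) + b_1(G) - |\sigma_{gl}(F) - \sigma_{gl}(G)| \leq 2g_T(K). \]
Taking $F = F_A$ and $G = F_B$, this reduces to the identity
\[ \sigma_{gl}(F_A) - \sigma_{gl}(F_B) = c \quad (\text{up to sign}). \]

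To prove it I will compute the Gordon--Litherland forms of the state surfaces directly. Each crossing contributes a half-twisted band whose twist sense is fixed by the state: every band of $F_A$ has the same local sign, and every band of $F_B$ the opposite one. The steps are as follows.

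First, compute the normal Euler numbers. With the paper's convention $e(F) = -(\text{boundary slope})$, Gordon--Litherland give
\[ e(F_A) - e(F_B) = -2\,\bigl(\#\text{crossings counted by the twist-sign difference}\bigr). \]
Since every crossing is an $A$-band in one surface and a $B$-band in the other, this comes out as $\pm 2c$ up to writhe terms. The writhe terms cancel because both surfaces have the same boundary and the writhe contributions coincide.

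Second, use $\sigma_{gl}(F) = \sigma(K) - \frac12 e(F)$, as noted in the introduction. This gives
\[ \sigma_{gl}(F_A) - \sigma_{gl}(F_B) = -\tfrac12\bigl(e(F_A) - e(F_B)\bigr) = \pm c. \]
Substituting into the reduction yields $\sg_3(K) \leq g_T(D) = g_T(K)$.

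The main obstacle is the first step, the normal Euler number computation. One must check that the slope of a state surface is exactly minus twice the number of crossings of one type, with the sign conventions of Figure~\ref{fig:twisted-band}, and that the linking of the pushoff is localized at the crossings. My first approach is a local computation: the linking number between $K$ and the surface-framed pushoff is a sum of crossing contributions of $\pm\frac12$ from each crossing of $D$ and from the doubled band. If that bookkeeping becomes messy, I would instead invoke Traczyk's or Gordon--Litherland's Goeritz-matrix formula. There the correction term $\mu$ counts type-II crossings, and for $F_A$ versus $F_B$ these are complementary, which gives the difference $c$ immediately.
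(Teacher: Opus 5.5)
Your argument is correct, but it takes a different route from the paper. The paper gives no argument of its own: it chains two cited inequalities, Ito's bound $\sg_3(K) \leq g_{\mathrm{alt}}(K)$ and Lowrance's bound $g_{\mathrm{alt}}(K) \leq g_T(K)$. You bypass the alternating genus and work directly with the all-$A$ and all-$B$ state surfaces of a Turaev-minimizing diagram. Since these are the checkerboard surfaces of the alternating projection of $K$ onto the Turaev surface of $D$, your computation is essentially what the two cited results amount to when composed. The citation route gives the stronger intermediate bound through $g_{\mathrm{alt}}$. Yours is self-contained, and it actually gives the exact identity $\Gamma^+(F_A) + \Gamma^-(F_B) = 2g_T(D)$ for every diagram $D$.

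The one step you should tighten is the slope computation, and there your fallback does not apply as stated. The Gordon--Litherland Goeritz formula concerns checkerboard surfaces of a diagram on $S^2$, and $F_A$, $F_B$ are such surfaces only when $D$ is alternating. Do the local framing count instead, and record it as
$s(F_A) = w(D) - c = -2c_-(D)$ and $s(F_B) = w(D) + c = 2c_+(D)$.
Relative to the blackboard framing, each $A$-band contributes $-1$ and each $B$-band contributes $+1$. This holds because the handedness of the half-twist depends only on the unoriented crossing and the choice of smoothing. The extra folds a band acquires when the state disks at that crossing are nested are not twists, so nesting does not change the count. Two sanity checks: choosing the oriented smoothing at every crossing gives slope $0$, and for the standard positive trefoil diagram $F_B$ is the M\"obius band of slope $6$.

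Then $\sigma_{gl}(F) = \sigma(K) + \frac{1}{2}s(F)$ gives $\sigma_{gl}(F_B) - \sigma_{gl}(F_A) = c$. This replaces your ``up to writhe terms'' discussion, and your count $b_1(F_A) + b_1(F_B) = 2g_T(D) + c$ finishes the proof.
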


In fact, since the Turaev genus is realized on an adequate diagram \cite{kalfagianni:jones-slopes}, we make the following conjecture.

\begin{conj} \label{conj:adequate-turaev}
	For an adequate knot $K$, $g_T(K) = \sg_3(K)$.
\end{conj}

%%%%%
\subsection{Inspiration from Geography Questions}
\label{ssec:geography-inspiration}

Our exploration of the $\pm$-spanning surface defect arose from Allen's geography problem for the non-orientable $4$-genus \cite{allen:geography}; Howie has also taken this perspective for spanning surfaces in $S^3$ \cite{howie:pc}.  For a knot $K$, Allen asked for a description of the set $R_4(K)$ of possible pairs $(e, b)$ realized by the normal Euler number and first Betti number, respectively, of a filling of $K$.  We may define $R_3(K)$ similarly; note that $R_3(K) \subset R_4(K)$.  As depicted in Figure~\ref{fig:intro-geography}, the set $R_*(K)$ is constrained by the Gordon-Litherland signature bound, with points of $R(K)$ lying in the wedge \[W_\sigma = \left\{b \geq \frac{1}{2}|e-2\sigma(K)|\right\} \subset \zz^2.\] 
In the geography diagram, we may read off $\gamma_n(K)$ as the lowest height of a point in $R_n(K)$, whereas $\sg^+_n(K)$ (resp.\ $\sg^-_n(K)$) is the minimum vertical distance from $R_n(K)$ to the upward-sloping (resp.\ downward-sloping) boundary of $W_{\sigma}$.  

\begin{figure}
\labellist
\small\hair 2pt
 \pinlabel {$2\sigma$} [t] at 62 6
 \pinlabel {$e$} [l] at 193 9
 \pinlabel {$b$} [b] at 170 84
 \pinlabel {$\Gamma^-(F)$} [l] at 30 44
 \pinlabel {$\Gamma^+(F')$} [r] at 116 55
 \pinlabel {$F$} [r] at 24 62
 \pinlabel {$F'$} [l] at 120 73
\endlabellist

\begin{center}
\includegraphics{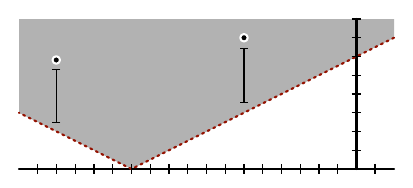}
\caption{The geography $R_*(K)$ is constrained by the Gordon-Litherland signature bound.  The $\pm$-spanning surface defect measures the vertical distance from the $(e,b)$ coordinates of a surface to the signature bound.}
\label{fig:intro-geography}
\end{center}
\end{figure}

The discussion above suggests that it may be profitable to think about the $\pm$-spanning surface defects as a type of non-orientable genus normalized by the signature and normal Euler number.  For example, we may reinterpret the Heegaard-Floer $\upsilon$ bound on the non-orientable $4$-genus of \cite{oss:unoriented} as in the following proposition, which we shall prove in Section~\ref{ssec:bound}:

\begin{prop} \label{prop:oss-sg}
	For any knot $K$, we have
	\begin{align*}
		\max\left\{\sigma(K) - 2\upsilon(K),0\right\} &\leq \sg^+_4(K),  \\
	\max\left\{2\upsilon(K) - \sigma(K),0\right\} &\leq \sg^-_4(K). 
	\end{align*}
\end{prop}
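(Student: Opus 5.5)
The plan is to apply the Ozsváth--Stipsicz--Szabó inequality from \cite{oss:unoriented}. For any (possibly non-orientable) filling $F \subset B^4$ of $K$, it states
\[\left|\upsilon(K) - \frac{\sigma(K)}{2} + \frac{e(F)}{4}\right| \leq \frac{b_1(F)}{2}.\]
The first step is to fix sign conventions. The paper's convention is that the normal Euler number of a spanning surface is the negative of its boundary slope. I would check this against \cite{oss:unoriented} on a test case, for instance the M\"obius band bounding the unknot with $e=\pm 2$ and $\upsilon=\sigma=0$. If the conventions differ, I would replace $e$ by $-e$ in the inequality above. This bookkeeping, and the choice of which of $\sg^\pm_4$ pairs with which sign of $\sigma-2\upsilon$, is the main obstacle; the rest is algebra.

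Next I would multiply the inequality by $2$, giving $|2\upsilon - \sigma + \tfrac12 e(F)| \leq b_1(F)$, and unpack the absolute value into two inequalities. The first is
\[\sigma(K) - 2\upsilon(K) \leq b_1(F) + \frac12 e(F) - \sigma(K) + \sigma(K) - \ldots,\]
which is best organized by rewriting it directly as
\[b_1(F) + \left(\sigma(K) - \tfrac12 e(F)\right) \geq \sigma(K) - 2\upsilon(K).\]
To see this, note that $b_1 \geq -(2\upsilon - \sigma + \tfrac12 e)$ rearranges to $b_1 + \sigma - \tfrac12 e \geq 2\sigma - 2\upsilon - \sigma = \sigma - 2\upsilon$. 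This says exactly $\Gamma^+(F) \geq \sigma(K) - 2\upsilon(K)$. The other half, $b_1 \geq 2\upsilon - \sigma + \tfrac12 e$, rearranges to $b_1 - \sigma + \tfrac12 e \geq 2\upsilon - 2\sigma + \sigma$. Correcting the constant, it gives
\[\Gamma^-(F) = b_1 - \sigma + \tfrac12 e \geq 2\upsilon - \sigma.\]
I would double-check this arithmetic carefully when writing the proof. If the two rearrangements come out with mismatched constants, the fix is to use the exact form of the OSS inequality (which is precisely $|\upsilon - \tfrac{\sigma}{2} + \tfrac{e}{4}| \le \tfrac{b_1}{2}$ in their normalization).

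Then I would minimize over all $F \in \spanning_4(K)$ to obtain $\sg^+_4(K) \geq \sigma - 2\upsilon$ and $\sg^-_4(K) \geq 2\upsilon - \sigma$. Finally, I would combine these with the non-negativity of $\Gamma^\pm$, which follows from the Gordon--Litherland signature bound (Section~\ref{ssec:signature}), or equivalently $R_4(K) \subset W_\sigma$. This gives $\sg^\pm_4 \geq 0$, and together the bounds yield the maxima in Proposition~\ref{prop:oss-sg}.
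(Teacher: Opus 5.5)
Your overall plan matches the paper's argument, which the paper phrases as the vertical distance between the edges of $W_\sigma(K)$ and $W_\upsilon(K)$: split the Ozsv\'ath--Stipsicz--Szab\'o inequality into its two halves, read each half as a lower bound on $\Gamma^\pm(F)$, minimize over fillings, and combine with $\Gamma^\pm \geq 0$ from Theorem~\ref{thm:GL}. However, you have misquoted the key input. Theorem~\ref{thm:oss} says $\left|2\upsilon(K) - \tfrac12 e(F)\right| \leq b_1(F)$, and no $\sigma$ appears in it. Your version, $\left|2\upsilon - \sigma + \tfrac12 e(F)\right| \leq b_1(F)$, is false, and the problem is not a sign convention. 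For $T(4,3)$ the paper has $\sigma = -6$, $\upsilon = -2$, and the pinch surface $F_4(4,3)$ with $e = -10$, $b_1 = 1$. Your inequality would then require $|{-4}+6-5| = 3 \leq 1$, and with the opposite sign of $e$ it would require $|{-4}+6+5| = 7 \leq 1$. Your proposed unknot test cannot detect either problem, since $\sigma = \upsilon = 0$ and the unknot bounds M\"obius bands with both $e = 2$ and $e = -2$.

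Even granting your inequality, the rearrangements do not follow. Adding $\sigma - \tfrac12 e$ to $b_1 \geq \sigma - 2\upsilon - \tfrac12 e$ gives $\Gamma^+(F) \geq 2\sigma - 2\upsilon - e(F)$. The other half likewise gives $\Gamma^-(F) \geq 2\upsilon - 2\sigma + e(F)$. Both bounds depend on $F$, so minimizing them does not produce an $F$-independent bound. Your ``correcting the constant'' step is where the $e$-terms were silently dropped. With the correct inequality, each half is an exact one-line identity:
\[\Gamma^+(F) - (\sigma - 2\upsilon) = b_1(F) - \left(\tfrac12 e(F) - 2\upsilon\right) \geq 0, \qquad \Gamma^-(F) - (2\upsilon - \sigma) = b_1(F) - \left(2\upsilon - \tfrac12 e(F)\right) \geq 0.\]
After that, your minimization over $\spanning_4(K)$ and the appeal to $\Gamma^\pm \geq 0$ finish the proof exactly as in the paper.
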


%%%%%
\subsection{Genus Gaps}

As in the orientable case, it is interesting to compare the non-orientable $3$- and $4$-genera of a knot.  We first address the  question of how large the gap between $\gamma_3(K)$ and $\gamma_4(K)$ can be.  Jabuka and Van Cott proved that the gap $\gamma_3(K) - \gamma_4(K)$ may be arbitrarily large for families of torus knots \cite{jvc:3vs4}.  We show that the gap is also arbitrarily large for a family of pretzel knots.

\begin{prop} \label{prop:pretzel-gap}
For any $n \geq 1$ and positive odd integers $k,r, p_1, \ldots, p_{n-1}$, the pretzel knot $P = P(-k,r,-r-1, p_1, -p_1, \ldots,p_{n-1}, -p_{n-1})$ satisfies
\begin{enumerate}
	\item $\gamma_4(P) = 1$,
	\item $\gamma_3(P) = 2n$, and 
	\item $g_4(P) \geq \frac{1}{2}(k - r - 2)$.
\end{enumerate}
\end{prop}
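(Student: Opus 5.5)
The plan is to treat the three parts separately. Each upper bound comes from an explicit surface, and each lower bound from an invariant or a known classification. Throughout, write $P = P(a_1,\dots,a_{2n+1})$ with $a_1=-k$, $a_2=r$, $a_3=-r-1$, and then the pairs $p_i,-p_i$. Exactly one parameter ($-r-1$) is even and the number of columns is odd, so $P$ is a knot.

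For (1), I will exhibit a non-orientable band move from $P$ to a ribbon knot. I would take a band across the $-k$ twist region that replaces that column by the $0$-tangle. This changes the parity pattern of the diagram, so the resulting cobordism is a M\"obius band rather than an orientable saddle, and the result is $P(r,-r-1,p_1,-p_1,\dots,p_{n-1},-p_{n-1})$. Next, for each adjacent pair $p_i,-p_i$ a single orientable band splits off an unknotted component, which is then capped with a disk. This is the standard ribbon move for pretzels with cancelling columns. What remains is the two-column pretzel $P(r,-r-1)$. Since adjacent vertical twist columns concatenate, it is $T(2,-1)$, the unknot. Capping everything off gives a filling with $b_1 = 1$, so $\gamma_4(P)\le 1$.

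For the lower bound $\gamma_4(P)\geq 1$, it suffices to show that $P$ is not slice. The determinant computation gives
\[\det P = \Bigl(\prod p_i^2\Bigr)\,\bigl|k - r(r+1)\bigr|,\]
because the reciprocals of the paired columns cancel. This is a square only in special cases, so Fox--Milnor will not settle non-sliceness uniformly. I would instead invoke the classification of slice pretzel knots with one even parameter (Lecuona's and Greene--Jabuka-type results), or a signature/$\upsilon$ obstruction via Proposition~\ref{prop:oss-sg}. I expect this non-sliceness step to be the main obstacle, since it must hold for every choice of odd $k,r,p_i$.

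For (2), the upper bound comes from the checkerboard surface built from two disks joined by the $2n+1$ twisted bands, one band per column. This surface has $b_1 = 2n$. For the matching lower bound I would cite Ichihara--Mizushima's computation of crosscap numbers of pretzel knots \cite{im:pretzel}. Their formula, specialized to one even column and $2n$ odd columns, should give exactly $2n$. Checking that our parameters fall into the case where the formula evaluates to the number of columns minus one is routine.

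For (3), I would use $|\sigma(P)|\le 2g_4(P)$. I would compute $\sigma(P)$ with the Gordon--Litherland formula $\sigma = \sigma_{gl}(F)+\tfrac12 e(F)$ applied to a checkerboard surface. The paired columns $p_i,-p_i$ should contribute cancelling amounts to both the Goeritz signature and the correction term, so the computation reduces to the three-column part $(-k,r,-r-1)$. There a direct $2\times 2$ Goeritz calculation yields $|\sigma|\ge k-r-2$, after tracking the sign conventions, which are the fiddly part. Dividing by $2$ gives $g_4(P)\ge \tfrac12(k-r-2)$.
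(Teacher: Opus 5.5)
\textbf{The gap is in the lower bound $\gamma_4(P)\ge 1$.} You list possible tools but carry none of them out. Worse, the goal you set, non-sliceness for every choice of odd $k,r,p_i$, cannot be reached, because it is false in some cases.

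The obstruction that works is already in your part (3). By Equation~\eqref{eq:sgn-pretzel}, $\sigma(P)\in\{k-r,\,k-r-2\}$. Whenever $\sigma(P)\neq 0$, Murasugi's bound rules out a slice disk, so $\gamma_4(P)\ge 1$.

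However, $\sigma(P)=0$ exactly when $k=r$ or $(k,r)=(3,1)$, and in those cases $P$ really is slice:
\begin{itemize}
\item For $k=r$, the columns $-r,r$ are adjacent. A Type I band turns $P(-r,r,-r-1)$ into $P(-r-1)\sqcup U$, a two-component unlink, so $P$ is ribbon.
\item For $(k,r)=(3,1)$, the tangle sum $1+1/(-2)=1/2$ shows that $P(-3,1,-2)=P(-3,2)=T(2,-1)$ is the unknot.
\end{itemize}
So part (1) needs these cases excluded, and no invariant or classification result can rule them out. The paper's own proof has the same omission: it only constructs the M\"obius band and leaves non-sliceness implicit in the signature formula.

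Apart from this, your route is essentially the paper's, with minor variations.
\begin{itemize}
\item \textbf{Part (1), upper bound.} Your M\"obius band deletes the $-k$ column: a saddle at the top of a twist column turns it into the $0$-tangle, leaving $P(r,-r-1)=T(2,-1)$. The paper's Type II band instead removes the pair $r,-r-1$ and leaves $P(-k)$. In both cases a knot goes to a knot, and that is the clean reason the band is non-orientable; the ``parity pattern'' remark is not needed.
\item \textbf{Part (3).} Your cancellation of the paired columns inside the Gordon--Litherland form is correct. The paper's reduction is simpler, though: the Type I bands you already used in (1), once capped off, are concordances, and $\sigma$ is a concordance invariant.
\item \textbf{Part (2).} You and the paper both just cite \cite{im:pretzel}. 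The hypothesis check is not routine for all positive odd parameters. If some $p_i=1$, the adjacent columns $1,-1$ cancel, so $\gamma_3(P)\le 2n-2$. If $r=1$, then $1+1/(-2)=1/2$ merges two columns, so $\gamma_3(P)\le 2n-1$.
\end{itemize}
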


The gap between the spanning surface defects $\sg^\pm_3(P)$ and $\sg^\pm_4(P)$, on the other hand, is at most $2$, though it is unclear if it is exactly $2$.   To find a non-trivial gap between spanning surface defects, we instead examine pretzel knots of the form $P_n = P(-3, 3, n)$ for odd $n \geq 3$. Knots of this form are slice \cite{gj:pretzel-slice}. Hence, we obtain $\sg^\pm_4(P_n) = 0$.  In contrast, the  negative Euler-normalized non-orientable $3$-genus of $P_n$ is non-vanishing.

\begin{prop}\label{prop:pretzelside}
For all odd $n \geq 3$, we have $\sg_3^-(P(-3,3,n)) = 2$, while $\sg_3^+(P(-3,3,n))=0$.
\end{prop}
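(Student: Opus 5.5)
The plan is to squeeze both quantities between explicit surfaces and general constraints: an explicit surface gives each upper bound, and the lower bounds come from parity, non-negativity, and Theorem~\ref{thm:alternating}. Throughout, set $P = P(-3,3,n)$ with $n\geq 3$ odd. Since $P$ is slice \cite{gj:pretzel-slice}, $\sigma(P)=0$. Hence for every $F \in \spanning_3(P)$ we have $\Gamma^\pm(F) = b_1(F) \mp \tfrac12 e(F)$.

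\emph{Parity.} I will first show that $\Gamma^\pm(F)$ is always even. Let $\sigma_{gl}(F)$ be the signature of the Gordon--Litherland form of $F$. We have $\sigma_{gl}(F) = \sigma(K) - \tfrac12 e(F)$, and this form has rank $b_1(F)$. It is nondegenerate, since its determinant is $\pm\det(K)$, which is odd for a knot. Therefore $\sigma_{gl}(F) \equiv b_1(F) \pmod 2$, and so $\Gamma^\pm(F) = b_1(F) \pm \sigma_{gl}(F)$ is even. Combined with the non-negativity of $\Gamma^\pm$ from Section~\ref{ssec:signature}, this gives $\sg_3^\pm(P) \in \{0,2,4,\dots\}$.

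\emph{Upper bounds.} The standard diagram of $P$ has a checkerboard surface made of two disks joined by three odd-twisted bands. For odd parameters this surface is an orientable genus-one Seifert surface, so $b_1=2$ and $e=0$. It gives $\Gamma^\pm = 2$, hence $\sg_3^-(P)\le 2$.

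For $\sg_3^+(P)=0$ I need a surface whose Gordon--Litherland form is definite with the appropriate sign, i.e.\ $e(F) = 2b_1(F)$. For this I will use a state surface of the pretzel diagram. One of the two all-$A$/all-$B$ states of this diagram is adequate, meaning its state graph has no loops. In the corresponding state surface every crossing band has the same handedness. Its Gordon--Litherland form is then the (reduced) Laplacian of the state graph, up to a global sign, and this is definite because the graph is connected. I will check directly, by drawing the state circles for the columns $-3,3,n$, which all-state is adequate. I will then compute that the sign of the resulting form is the one giving $\Gamma^+ = 0$; if the sign comes out reversed, I will use the other convention or the mirror-symmetric choice of state. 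This gives $\sg_3^+(P)=0$.

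\emph{Lower bound $\sg_3^-(P)\ge 2$.} By parity it suffices to show that $\sg_3^-(P)\neq 0$. Since $\sg_3^+(P)=0$, Theorem~\ref{thm:alternating} reduces this to showing that $P$ is not alternating. I expect this to be the main obstacle, since it requires an external input. My first attempt is the Kauffman--Murasugi--Thistlethwaite argument. If $P$ were alternating, the span of its Jones polynomial would equal its crossing number. I would compute the span of $V_P$, which is feasible for three-strand pretzels via the Kauffman bracket and the adequate state above. I would then show that it is strictly smaller than the crossing number, using known crossing-number results for pretzel knots, which are realized by the (semi-adequate) pretzel diagram. Failing that, I would cite the classification of alternating pretzel knots, or Greene's criterion that an alternating knot has a definite Goeritz form of each sign coming from checkerboard surfaces, and check it against the determinant $9$.
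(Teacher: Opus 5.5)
The step that fails as written is your justification of $\sg_3^+(P)=0$. The conclusion is true, but your reason for definiteness is false. The Gordon--Litherland form of an adequate state surface is not, in general, the Laplacian (on the cycle lattice) of the state graph, and adequacy alone does not give definiteness. If it did, the two all-state surfaces of any adequate diagram would be definite of opposite signs, and Greene's theorem would make every adequate knot alternating.

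Your own surface already refutes the identification. The adequate state turns the $-3$ column into a cap, two small circles $s_1,s_2$ and a cup, and turns the $3$ and $n$ columns into pairs of vertical arcs. This gives four state circles $C_2\subset C_1$, $s_1$, $s_2$, and the state graph is a triangle $C_1s_1s_2$ together with $n+3$ parallel edges $C_1C_2$. By the matrix-tree theorem the Laplacian form has determinant $3(n+3)$. But the Gordon--Litherland form of \emph{any} spanning surface of $P$ has determinant $\pm\det(P)=\pm 9$. The nesting $C_2\subset C_1$ contributes linking that the Laplacian does not see.

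You therefore need a direct computation. First, $b_1=(n+6)-4+1=n+3$. Second, $e$ is $-2$ times the sum of the signs of the crossings at which the state disagrees with the Seifert smoothing. Here these are exactly the $n+3$ like-signed crossings of the last two columns, so $|e|=2(n+3)=2b_1$. Since $\sigma(P)=0$, this forces the form to be definite. With the paper's conventions $e=2n+6$, matching the row $\ep^{III}_3$ of Table~\ref{tab:compute}, so $\Gamma^+=0$. Your fallback of switching ``convention'' or state is not available. The other all-state surface is inadequate with $(e,b_1)=(-6,5)$, giving $\Gamma^+=8$. So the sign simply has to come out right, and it does.

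With that repair, the rest of your argument is sound and takes a genuinely different route from the paper. The paper gets $\sg_3^-(P)\ge 2$ by enumerating all Hatcher--Oertel candidate surfaces for $P$ and applying Corollary~\ref{cor:rational-bound}. You instead combine three ingredients:
\begin{itemize}
\item evenness of $\Gamma^\pm$, which is correct since the Gordon--Litherland form is nondegenerate of rank $b_1$;
\item the genus-one Seifert surface, for the upper bound $\sg_3^-(P)\le 2$;
\item Theorem~\ref{thm:alternating} together with non-alternation of $P$.
\end{itemize}
This is essentially the shortcut the paper itself mentions after stating the proposition, with the Turaev-genus input replaced by parity and an explicit surface.

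Your Jones-polynomial argument for non-alternation works. The two all-states have $4$ and $n+2$ circles, so $\mathrm{span}\,V_P\le n+5$. Lickorish--Thistlethwaite's theorem that reduced Montesinos diagrams are minimal gives $c(P)=n+6$, which an alternating knot would need to equal the span.

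The trade-off is that your proof is short but leans on Greene--Howie and Lickorish--Thistlethwaite, and it yields nothing about the geography. The paper's enumeration of essential surfaces, by contrast, constrains $R_3(P_n)$ and is offered as a template for Conjecture~\ref{conj:adequate-turaev}.
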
 

Though the result in Proposition~\ref{prop:pretzelside} could have been derived from Theorem~\ref{thm:alternating}, Theorem~\ref{thm:turaev}, and the fact that non-alternating pretzel knots have Turaev genus $1$ \cite{ak:alternating}, the ideas underlying the calculation, which include enumerating essential surfaces in the exterior of $P_n$, demonstrate what may be needed to prove Conjecture~\ref{conj:adequate-turaev}.

We end by showing that the gap between the Euler-normalized non-orientable $3$- and $4$-genera may be arbitrarily large.  Specifically, we show that the Euler-normalized non-orientable $3$-genus is additive under connected sum (see Lemma~\ref{lem:connectedsum}); we note that Josh Howie independently proved this proposition \cite{howie:pc}. We then use the connected sum formula to prove:

\begin{thm} \label{thm:large-gap}
	For all $n \in \nn$, there exists a knot $K_n$ such that 
	\[\sg^-_3(K_n) - \sg^-_4(K_n) = 2n.\]
\end{thm}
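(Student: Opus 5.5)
Take $K_n = \#^n P(-3,3,3)$, the connected sum of $n$ copies of the pretzel knot $P_3 = P(-3,3,3)$ (any odd $m \geq 3$ would do in place of the third entry). The proof has two halves: compute the $3$-dimensional defect by additivity, and show the $4$-dimensional defect vanishes because $K_n$ is slice.

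For the $3$-dimensional side, Proposition~\ref{prop:pretzelside} gives $\sg_3^-(P_3) = 2$. The connected sum formula, Lemma~\ref{lem:connectedsum}, says the Euler-normalized non-orientable $3$-genus is additive under connected sum. Applying it $n-1$ times gives
\[\sg_3^-(K_n) = n\,\sg_3^-(P_3) = 2n.\]
Additivity is taken as the key input here. It would be proved in two steps.
\begin{itemize}
\item The inequality $\le$ is easy: take a boundary connected sum of optimal surfaces. $b_1$, $e$ and $\sigma$ are all additive, so $\Gamma^-$ is additive.
\item The inequality $\ge$ is the real obstacle. Given a minimizing spanning surface of $K \# K'$, one isotopes it so that it meets the decomposing $2$-sphere in a single arc. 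This uses an innermost-curve and outermost-arc argument. Each compression or surgery on closed curves must be checked not to increase $\Gamma^-$. Compressions lower $b_1$ by $2$ while changing $\sigma_{gl}$ by at most $2$, so $\Gamma^-$ does not increase. Discarded closed components have $\Gamma^- \ge 0$ after accounting.
\end{itemize}
Since the lemma is stated in the paper, we simply invoke it.

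For the $4$-dimensional side, $P_3$ is slice by \cite{gj:pretzel-slice}. A connected sum of slice knots is slice, so $K_n$ bounds a disk $D \subset B^4$. For this disk, $b_1(D) = 0$ and $e(D) = 0$. Its signature vanishes as well: the signature is additive, and $\sigma(P_3) = 0$ since $P_3$ is slice. Hence $\Gamma^\pm(D) = 0$. Because $\Gamma^\pm$ is non-negative (Section~\ref{ssec:signature}), we conclude $\sg_4^-(K_n) = 0$.

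Combining the two halves gives $\sg_3^-(K_n) - \sg_4^-(K_n) = 2n$. The case $n=0$, if $\nn$ includes $0$, is the unknot, for which both defects are $0$. The only nontrivial ingredient beyond cited facts is the superadditivity direction of Lemma~\ref{lem:connectedsum}. If that lemma were only available for $\sg_3$ in Ito's averaged form, we would fall back on this: $\sg_3^+(K_n) = 0$, since alternating-type surfaces for each summand sum to zero and $\sg_3^+(P_3) = 0$. Ito's additivity then gives $\sg_3^-(K_n) = 2\sg_3(K_n) = 2n$.
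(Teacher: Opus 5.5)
Your proposal is correct and matches the paper's proof. The paper also takes $K_n = \#_n P(-3,3,3)$, combines Proposition~\ref{prop:pretzelside} with the additivity in Lemma~\ref{lem:connectedsum} to get $\sg^-_3(K_n)=2n$, and uses sliceness of $K_n$ to get $\sg^-_4(K_n)=0$; your check that the slice disk has $\Gamma^\pm = 0$ (since slice knots have vanishing signature) just spells out a step the paper leaves implicit.
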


The same result for $\sg^+$ holds by taking mirror images.

%%%%%
\subsection{Plan of the Paper}

The remainder of the paper is organized as follows: we  review non-orientable spanning surfaces and their geography in Section~\ref{sec:geography3}.   A parallel discussion of non-orientable fillings and their geography appears in Section~\ref{sec:geography4}, culminating in a proof of Proposition~\ref{prop:oss-sg}.  Next, we examine situations in which $\gamma_3(K) > \gamma_4(K)$ (Section~\ref{sec:gap}, which includes a proof of Proposition~\ref{prop:pretzel-gap}) and where $\sg^\pm_3 (K) > \sg^\pm_4 (K)$ (Section~\ref{sec:stable-gap-pretzel}, which includes a proof of Proposition~\ref{prop:pretzelside}). We end in Section~\ref{sec:stable-gap-sum} with a discussion of the behavior of $\sg^\pm_3$ under connected sum, which yields a proof of Theorem~\ref{thm:large-gap}.

\subsection*{Acknowledgements}

We thank David Futer and Josh Howie for enlightening discussions about the context and potential implications of this work.  We further thank the participants of the Philadelphia Area Contact / Topology (PACT) Seminar for their feedback on a presentation of work in progress.

%% file: geography3.tex
%!TEX root = non-ori-3d-4d.tex

The goal of this section is to describe the framework necessary to investigate the non-orientable spanning surfaces of a knot.   We start by reviewing the boundary slope of a spanning surface and by adapting Allen's geography problem to the $3$-dimensional setting; again, Howie has also taken this perspective.  We finish with a discussion of essential surfaces, their boundary slopes, and their relationship to the $3$-dimensional geography of a knot in Section~\ref{ssec:essential}.

% *****
\subsection{Boundary Slope and 3-Dimensional Geography}
\label{ssec:bs}

A possibly non-orientable spanning surface $F$ of a knot $K \subset S^3$ has two natural homological invariants:  its \dfn{first Betti number} $b_1(F)$ and its \dfn{boundary slope} $s(F)$.  To define the boundary slope, we push $K$ by a small amount along $F$ to obtain a knot $K'$; the boundary slope is then given by 
\[s(F) = \lk (K,K').\]
For later comparison with fillings in four dimensions, we also define the \dfn{($3$-dimensional) normal Euler number} of $F$ to be $e_3(F) = -s(F)$. See Figure~\ref{fig:trefoil-mobius} for an example of a trefoil knot bounding a M\"obius strip $F$ with $e_3(F)=-6$.

\begin{figure}
\begin{center}
\includegraphics{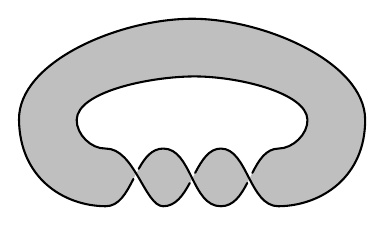}
\caption{A trefoil knot $K$ bounding a M\"obius strip $F$ with $e_3(F) = -6$.  Each crossing of $K$ induces four crossings (all positive) between $K$ and the pushoff $K'$.}
\label{fig:trefoil-mobius}
\end{center}
\end{figure}

The boundary slope has several useful properties. First, the boundary slope vanishes on orientable surfaces.  Second, the boundary slope is always even \cite[Proposition 2.2]{ak:alternating}.  In fact, the proof of Proposition 2.2 in \cite{ak:alternating} shows that 
\begin{equation} \label{eq:s-parity}
	e_3(F) \equiv s(F) \equiv 2b_1(F) \mod 4.
\end{equation}

We organize our exploration of spanning surfaces of a knot $K$ by adapting Allen's geography question for fillings in $4$ dimensions to the $3$-dimensional setting.

\begin{defn} [Adapted from \cite{allen:geography}] \label{defn:3dgeography} 
The \dfn{three-dimensional geography of a knot $K$} is the subset $R_3(K) \subset 2\zz \times \nn$ of all pairs $(e_3(F), b_1(F))$ for $F \in \spanning_3(K)$.  An element $(e,b) \in R_3(K)$ is called \dfn{realizable}.
\end{defn}

We typically depict the geography of a knot on a $2\zz \times \nn$ grid.  For example, Figure~\ref{fig:trefoil-geography-3} shows the $3$-dimensional geography of the trefoil knot; we will discuss why the figure is correct at the end of this subsection.

\begin{figure}
\labellist
\small\hair 2pt
 \pinlabel {$e$} [l] at 155 9
 \pinlabel {$b$} [b] at 117 84
 \pinlabel {$-6$} [t] at 63 5
 \pinlabel {$2$} [t] at 135 5
 \pinlabel {$1$} [l] at 120 18
 \pinlabel {$3$} [l] at 120 37
\endlabellist
\begin{center}
\includegraphics{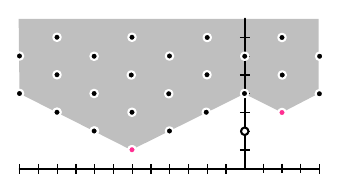}
\caption{The three-dimensional geography $R_3(K)$ for the trefoil knot $K$, with each solid dot denoting a point realized by a spanning surface.  The vertex at $(-6,1)$ is the M\"obius strip in Figure~\ref{fig:trefoil-mobius}, while the vertex $(2,3)$ is the positive twist of the Seifert surface.  The Seifert surface itself is denoted by an open dot at $(0,2)$. }
\label{fig:trefoil-geography-3}
\end{center}
\end{figure}

The addition of a twisted band is an important operation in the study of the $3$-dimensional geography.  Note once again that we define the sign of a twisted band in terms of the normal Euler number, not the boundary slope. The twisted band operation motivates the definition of a \dfn{wedge} $W_{(E,B)} \subset 2\zz \times \nn$, which consists of pairs $(e,b)$ that satisfy Equation~\eqref{eq:s-parity} and the relation 
\[\frac{1}{2}|e-E| \leq b-B.\] 
If $F$ is a spanning surface, we write $W_F$ for $W_{(e_3(F), b_1(F))}$.  The addition of twisted bands shows that if $F$ is a spanning surface for $K$, then $W_F \subset R_3(K)$.  In fact, we have
\begin{equation} \label{eq:wedge-geography}
	R_3(K) = \bigcup_{F \in \spanning_3(K)} W_F.
\end{equation}

To see that Figure~\ref{fig:trefoil-geography-3} correctly depicts the $3$-dimensional geography of the trefoil knot, we invoke Adams and Kindred's work on state surfaces of alternating knots \cite{ak:alternating}.  They show that the $3$-dimensional geography of an alternating knot is the union of the wedges of the non-orientable basic state surfaces of a reduced alternating diagram, together with the wedges of the orientable basic state surfaces with twisted bands added.  Since the only basic state surfaces for the diagram of the trefoil in Figure~\ref{fig:trefoil-mobius} are the Seifert surface and the M\"obius strip, we obtain the geography in Figure~\ref{fig:trefoil-geography-3}.

% *****
\subsection{Essential Surfaces}
\label{ssec:essential}

While the geography of alternating knots can be analyzed using state surfaces, not every class of knots has such a known ``generating set''. In order to understand these classes, we must consider the set of incompressible and $\partial$-incompressible surfaces in the complement of $K$.  

We briefly recall some definitions of the required tools. Suppose $Y$ is a $3$-manifold with boundary and $F$ is a properly embedded surface in $Y$. Below, we will only consider when $Y$ is the exterior of a knot $K$, i.e.\ when $Y$ is $E(K)= S^3 \setminus \nu(K)$. Choosing a meridian $\mu$ and Seifert-framed longitude $\lambda$ for $\partial \nu(K)$, we write $[\partial F] = a[\mu] + b[\lambda]$.  The \dfn{boundary slope} $s(F)$ is $\frac{a}{b}\in \qq \cup \{\frac{1}{0}\}$. This agrees with the definition of boundary slope for spanning surfaces, as for a spanning surface, $b=1$ and $a$ measures the linking number with respect to the Seifert framing.  As before, the \dfn{normal Euler number} $e_3(F)$ is the negation of the boundary slope.  We extend the Euler-normalized first Betti numbers $\Gamma^\pm$ to any surface properly embedded in the exterior of a knot.

We follow \cite{ht:2-bridge-incompressible} in defining a \dfn{compressing disk} for $F$ to be an embedded disk $D \subset Y$ with $D \cap F = \partial D$.  The surface $F$ is \dfn{incompressible} if for each compressing disk $D$, there is a disk $D' \subset F$ with $\partial D' = \partial D$.  Similarly, a \dfn{$\partial$-compressing disk} of $F$ is an embedded disk $D \subset Y$ with $D \cap F = \alpha$ and $D \cap \partial Y = \beta$, where $\alpha \cup \beta = \partial D$ and $\alpha \cap \beta = S^0$.  The surface $F$ is \dfn{$\partial$-incompressible} if for each $\partial$-compressing disk $D$, there is a disk $D' \subset F$ with $\partial D' = \alpha \cup \beta'$ with $\beta' \subset \partial F$.  A \dfn{compression} along a compressing or $\partial$-compressing disk is a surgery on $F$ along the compressing or $\partial$- compressing disk. Finally, we say that a surface that is  incompressible and  $\partial$-incompressible, or is a sphere not bounding a ball, or is a disk that is not $\partial$-parallel, is \dfn{essential}.

To understand how essential surfaces are related to the $3$-dimensional geography of a knot $K$, we define, for any $(u,v) \in \qq \times \nn$, the \dfn{rational wedge based at $(u,v)$} by
\[\tilde{W}_{(u,v)} = \left\{(e,b) \in \qq \times \nn \;:\; |e-u| \leq 2(b-v) \right\}.\]
If $F$ is a properly embedded surface in $E(K)$, we set the notation $\tilde{W}_F = \tilde{W}_{(e_3(F), b_1(F))}$.  Now let $\mathcal{F}_{\text{ess}}(K)$ denote the set of all incompressible and  $\partial$-incompressible surfaces in $E(K)$, which is a finite set by \cite{hatcher:finite-boundary-slopes}.  Define the \dfn{incompressible wedge} of $K$ to be
\[\tilde{W}(K) = \bigcup_{F \in \mathcal{F}_{\text{ess}}(K)} \tilde{W}_F.\]

We may now state the main structural result in this section, which constrains the non-orientable $3$-dimensional geography of a knot $K$ using knowledge of the incompressible and $\partial$-incompressible surfaces in its exterior. This should be thought of as an analogue of Equation~\eqref{eq:wedge-geography}. 

\begin{prop}
\label{prop:incompr-constraint}
	For each knot $K$, $R_3(K) \subset \tilde{W}(K)$.
\end{prop}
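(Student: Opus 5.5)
Take a spanning surface $F \in \spanning_3(K)$ and show that $(e_3(F), b_1(F))$ lies in $\tilde{W}_G$ for some $G \in \mathcal{F}_{\text{ess}}(K)$. The plan is to simplify $F$ by a sequence of compressions and $\partial$-compressions until it becomes essential, tracking $(e_3, b_1)$ at each step. Concretely, restrict $F$ to the exterior $E(K)$; this gives a properly embedded surface with a single boundary curve of slope $s(F)$ and the same first Betti number.

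The key claim is a monotonicity statement. If $F'$ is obtained from a properly embedded surface $F$ in $E(K)$ by one compression or one $\partial$-compression, possibly followed by discarding components, then $\tilde{W}_F \subset \tilde{W}_{F'}$. Equivalently, we need
\[|e_3(F) - e_3(F')| \leq 2\,(b_1(F) - b_1(F')).\]
For surfaces with several boundary components or disconnected surfaces, $b_1$ and $e_3$ must be given suitable meanings. I would use $e_3 = -a/b$ computed from the total class $[\partial F] = a[\mu]+b[\lambda]$. For $b_1$, I would work with a quantity that behaves additively, such as $-\chi$ plus a correction, or else restrict attention to a single chosen component.

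Compressions along disks in the interior leave $\partial F$ unchanged, so they preserve $e_3$. They do not increase $b_1$: a compression either cuts the surface into two pieces or lowers the genus, and discarding closed components or spheres does not increase $b_1$ of the piece carrying the boundary. So the inequality holds for compressions.

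A $\partial$-compression changes $\chi$ by $+1$ and modifies the boundary by a band move on the torus $\partial E(K)$. Starting from one boundary curve of slope $p/1$, the result either splits into two parallel curves, possibly with the same slope, or produces a curve of a different slope. For a spanning surface a $\partial$-compression is rarely possible without the surface being $\partial$-parallel or reducible, since a single curve meeting the boundary torus once longitudinally admits an essential arc only in restricted ways. This case is the main obstacle: controlling how the boundary slope moves under a $\partial$-compression, versus the drop of $1$ in $b_1$. I would try first the standard fact (as in Hatcher's and Hatcher--Thurston's analyses) that for a one-boundary-component surface, a $\partial$-compression produces a surface whose boundary is the same slope (two parallel copies) or inessential on the torus. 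In the two-copies case the slope $e_3$ is unchanged and $b_1$ drops. In the inessential case, capping off with a meridian disk gives a closed surface that can be discarded.

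Iterating the moves until the surface is essential terminates by complexity, since $-\chi$ decreases and the number of components is bounded. The last component carrying a nontrivial boundary is an essential surface $G \in \mathcal{F}_{\text{ess}}(K)$ satisfying $(e_3(F),b_1(F)) \in \tilde{W}_G$. If every component becomes closed, $F$ was compressible down to something $\partial$-parallel. This cannot happen for a Seifert-type surface, which has nonzero boundary homology. One could also note that Seifert surfaces of minimal genus are themselves essential and handle that case directly. This gives $R_3(K) \subset \tilde{W}(K)$.
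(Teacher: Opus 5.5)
\textbf{There is a genuine gap, in the $\partial$-compression step.} Your framework matches the paper's. You compress and $\partial$-compress down to an essential surface and check at each move that $|e_3(F)-e_3(F')|\le 2(b_1(F)-b_1(F'))$, so that $\tilde{W}_F\subset\tilde{W}_{F'}$. The compression case is fine. You correctly flag the $\partial$-compression case as the crux, but you resolve it with a false claim.

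Your ``standard fact'' covers only the case where the arc $\beta=D\cap\partial E(K)$ meets $\partial F$ from the \emph{same} side in the torus. There $\beta$ cobounds a disk in $\partial E(K)$ with a subarc of $\partial F$, the $\partial$-compression is really a compression, and the slope is unchanged. For one-sided surfaces, which are exactly the non-orientable spanning surfaces you care about, $\beta$ can instead cross the annulus $\partial E(K)\setminus\partial F$ and meet $\partial F$ from both sides. The band surgery then produces a \emph{single} curve of a \emph{different} slope. In a basis where $\partial F=\lambda$, one gets $[\partial F']=2\mu+(2n+1)\lambda$ for some $n\in\zz$ determined by how $\beta$ winds, so the slope moves from $0$ to $2/(2n+1)$.

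A concrete counterexample to your claim is $\twist_+(G)$. The added twisted band is $\partial$-compressible, and $\partial$-compressing across it returns $G$, changing $e_3$ by $2$. This also refutes your assertion that spanning surfaces are ``rarely'' $\partial$-compressible. Incidentally, ``two parallel copies of the same slope'' cannot arise from a single boundary curve at all.

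\textbf{What is missing} is the quantitative bound $|e_3(F')-e_3(F)|\le 2$ for \emph{every} $\partial$-compression. You need it for arbitrary rational slopes $p/q$, not just integers, because after one such move the slope is typically non-integral (e.g.\ $p+2/(2n+1)$), and later moves start from there.

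The paper proves this as a separate lemma. It moves $\partial F$ to $\lambda$ by $\phi=\left(\begin{smallmatrix} r & p\\ s & q\end{smallmatrix}\right)\in SL(2,\zz)$ with $0\le s<q$, applies the model computation above, and gets
$$s(F')=\frac{2r+(2n+1)p}{2s+(2n+1)q}, \qquad |s(F')-s(F)|=\left|\frac{2}{q(2s+(2n+1)q)}\right|\le 2.$$
Together with $b_1$ dropping by $1$, this is exactly your monotonicity inequality. With that lemma in place, the rest of your argument goes through: termination, and the final essential piece carrying the boundary.
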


The key technical idea in the proof is that we can bound the change in boundary slopes under a $\partial$-compression.

\begin{lem}
\label{lem:slope-compression-bound}
	Given a knot $K$ and a properly embedded surface $F \subset E(K)$, let $F'$ be the result of a $\partial$-compression of $F$. The difference between the boundary slopes (and hence the normal Euler number) is bounded by $2$, i.e.\ 
	\[\left| e_3(F') - e_3(F) \right| \leq 2.\]
	If $F$ is incompressible, then we get equality in the bound above if and only if both $F$ and $F'$ are spanning surfaces.
\end{lem}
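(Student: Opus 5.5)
We work on the boundary torus $T = \partial E(K)$ with basis $\mu,\lambda$ and track how $\partial F$ changes under a $\partial$-compression. Let $D$ be the $\partial$-compressing disk, with $\partial D = \alpha\cup\beta$, $\alpha\subset F$ and $\beta\subset T$. The arc $\beta$ has endpoints on $\partial F$ and interior disjoint from $\partial F$. The compression removes a neighborhood of $\alpha$ from $F$ and glues in two parallel copies of $D$. On $T$, this is a band move on the multicurve $\partial F$ along $\beta$: $\partial F' $ is obtained from $\partial F$ by cutting at the two endpoints of $\beta$ and reconnecting along the two parallel copies of $\beta$.

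The first step is to compute $[\partial F']$ in $H_1(T)$ and compare it with $[\partial F]$. For a spanning surface, or more generally whenever $\partial F$ consists of parallel curves of slope $a/b$, the arc $\beta$ runs between two components, or returns to the same component. If it returns to the same component, it does so from the same side or from opposite sides.
\begin{enumerate}
\item If $\beta$ joins two distinct parallel components, the band sum of two coherently or incoherently oriented parallel curves yields either a curve of the same total class or an inessential curve. The slope is then unchanged, or the boundary component count drops by two.
\item If $\beta$ returns to the same component from the same side, the band move splits it into an inessential circle and a copy of the curve. The slope is again unchanged.
\item The interesting case is an arc that passes from one side of $\partial F$ to the other, which can happen when $F$ is non-orientable near $\partial F$. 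The resulting curve differs from the original in homology by at most $\pm[\mu]$ per pass across the boundary, in a way measured by the algebraic intersection of $\beta$ (closed up along $\partial F$) with $\partial F$.
\end{enumerate}
The key claim is that $\beta$ meets $\partial F$ only at its endpoints. So the change in the numerator $a$, keeping track of the fact that the slope counts linking with the push-off, is at most $2$ in absolute value. This follows from an intersection-number computation on $T$: the new class $[\partial F']$ satisfies $[\partial F']\cdot[\partial F] \in\{0,\pm1\}$ times the multiplicity, which gives $|s(F')-s(F)|\le 2$ once we normalize. Concretely, I would parametrize $T\setminus \partial F$ as an annulus, write $\beta$ as an arc in it, and compute the linking number of $\partial F'$ with its push-off along $F'$ directly. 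Near $\beta$, the push-off changes by at most two crossings with $K$, one at each end of the band.

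The equality case will be the main obstacle. If $|e_3(F')-e_3(F)|=2$, the band move must genuinely change the slope. By the case analysis above, this forces $\beta$ to cross from one side of $\partial F$ to the other, so $\partial F$ and $\partial F'$ must each be a single curve of slope $a/1$, that is, longitudinal. Conversely, for two spanning surfaces differing by a $\partial$-compression, the compression is the inverse of adding a twisted band, which changes $e_3$ by exactly $2$.

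The subtle point is ruling out non-spanning incompressible $F$ whose slope changes by $2$. Here I would use incompressibility. If $\partial F$ has more than one component, or slope with $b\neq1$, then a slope-changing band move produces an inessential boundary curve or a non-parallel multicurve on $T$. In either situation one can build a compressing disk for $F$, or contradict that $\partial F'$ is a multicurve of a single slope. I expect this step to need the most care.
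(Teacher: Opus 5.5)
There is a genuine gap, and it sits in your case (3), the only case in which the slope changes. There $\partial F$ is a single curve $c$: with two or more parallel components, every complementary annulus has two distinct boundary curves, so no arc can pass from one side of a component to the other. Let $\hat\beta$ be $\beta$ closed up along $c$. Then $\hat\beta\cdot c=\pm1$, and the band sum is a single curve with $[\partial F']=\pm(2[\hat\beta]\pm[c])$. After an $SL(2,\zz)$ change of basis taking $c$ to $\lambda$, as in the paper, this class is $2\mu+(2n+1)\lambda$ for some $n\in\zz$.

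So your two key claims are false:
\begin{itemize}
\item $\partial F'$ does not differ from $\partial F$ by ``at most $\pm[\mu]$ per pass''. It can wind $|2n+1|$ times longitudinally, and in the original basis the numerator can change by far more than $2$.
\item $[\partial F']\cdot[\partial F]=\pm2$, not $0$ or $\pm1$.
\end{itemize}
Counting crossings of a push-off is also not available, because for $q\neq1$ the boundary is not a longitude and the slope is not a linking number with a push-off. What actually proves the bound is arithmetic. Write $[\partial F]=p\mu+q\lambda$ and $[\partial F']=p'\mu+q'\lambda$. Then $|pq'-p'q|=2$, so $|s(F')-s(F)|=2/|qq'|\le 2$ whenever $q'\neq0$.

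The same formula settles the equality case, and it shows your planned use of incompressibility targets an obstruction that does not exist. A surface whose boundary is a single curve of slope $p/q$ with $|q|\ge2$ can be $\partial$-compressed along a two-sided arc. The result is again a single essential curve, with no inessential component and no non-parallel multicurve, so there is no compressing disk to build and no contradiction to reach. Equality is excluded there simply because $2/|qq'|=2$ forces $|q|=|q'|=1$. That means both boundaries are single curves of integral slope, so both surfaces are spanning. Your step ``crossing sides forces slope $a/1$'' does not follow without this computation.

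Likewise, your converse via ``the inverse of adding a twisted band'' is unjustified: the band restored by undoing a $\partial$-compression need not be a local twisted band. The converse also follows from $2/|qq'|$, once you know the arc is two-sided. The paper gets two-sidedness from incompressibility, since a one-sided arc makes the $\partial$-compression equivalent to a compression.

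Your treatment of multi-component boundaries is a worthwhile addition that the paper glosses over. Note, though, that in your case (1) the band sum of two distinct parallel curves is always inessential, never a curve of the same total class.
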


\begin{proof}
	We first suppose that $F$ has boundary slope $0$, i.e.\ $\partial F$ is a canonical longitude $\lambda$ of the boundary torus $T$. Recall that a $\partial$-compressing disk $D$ imprints an embedded arc $\beta$ on $T$ that intersects $\partial F$ precisely at its endpoints.  
	
	Arguing as in the proof of Theorem 1 of \cite{ht:2-bridge-crosscap}, we see that $\beta$ either intersects $\partial F$ on one or two sides in $T$.  If $\beta$ intersects $\partial F$ on one side, then $\beta$ and a subarc of $\partial F$ bound a disk in $T$.  That disk, when pushed slightly into $E(K)$, combines with the $\partial$-compressing disk $D$ to yield a compressing disk $D'$.  Thus,  $\partial$-compression along $D$ is tantamount to compression along $D'$, and hence $F$ and $F'$ have the same boundary slope.
	
\begin{figure}
\labellist
\small\hair 2pt
 \pinlabel {(a)} [t] at 62 11
 \pinlabel {(b)} [t] at 207 11
 \pinlabel {$\lambda$} [t] at 20 14
 \pinlabel {$\mu$} [r] at 5 29
 \pinlabel {$n$} [l] at 24 95
 \pinlabel {$\beta$} [b] at 100 100
 \pinlabel {$\partial F$} [b] at 100 56
\endlabellist
\begin{center}
\includegraphics{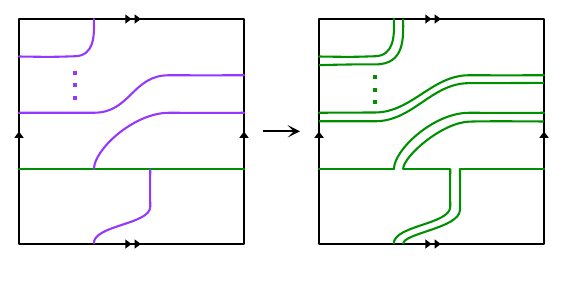}
\caption{(a) The curve $\beta$ when it intersects $\partial F$ on two sides. (b) The result at the boundary of a $\partial$-compression along $D$.}
\label{fig:beta-curve}
\end{center}
\end{figure}
	
	If, on the other hand, $\beta$ intersects $F$ on two sides, then up to isotopy, $\beta$ must be of the form pictured in Figure~\ref{fig:beta-curve}(a) for some $n \in \zz$. That is, after adding a subarc of $\partial F$ to obtain a closed curve $\hat{\beta}$, we must have that $[\hat{\beta}]$ is of the form $n \mu + \lambda$.  Performing a $\partial$-compression along $D$ yields $2n$ additional longitudes and $2$ additional meridians for $\partial F'$; see Figure~\ref{fig:beta-curve}(b). In particular, we have $[\partial F'] = 2\mu + (2n+1)\lambda$.  
	
%	To determine when we get the equality $|e(F')-e(F)|=2$ in the case that $F$ is incompressible and has boundary slope $0$, we note that $F$ is a spanning surface and that $F'$ is a spanning surface if and only if $n=0$, i.e.\ if and only if $|e(F')| = 2$.
	
	More generally, suppose that $F$ has boundary slope $\frac{p}{q}$.  Consider the automorphism $\phi$ of $T$ represented by the matrix \[\begin{bmatrix} r & p \\ s & q \end{bmatrix} \in SL(2,\zz).\]
In particular, we may assume that $rq-ps = 1$ and $0 \leq s < q$. The automorphism $\phi$ carries the longitude $\lambda$ to $\partial F$.  By the first case, we know that $\phi^{-1}(\partial F') = 2\mu + (2n+1)\lambda$ for some $n \in \zz$.  Thus, the boundary slope of $F'$ may be written as
\[s(F') = \frac{2r+(2n+1)p}{2s+(2n+1)q}.\]
We then use the fact that $\phi \in SL(2,\zz)$ to compute that
\begin{equation} \label{eq:slope-bound}
	|s(F') - s(F)| = \left|\frac{2}{q(2s+(2n+1)q)}\right| \leq 2,
\end{equation}
which proves the bound stated in the lemma.

To prove the last statement of the lemma, first note that the incompressibility of $F$ implies, as above, that $\beta$ intersects $\partial F$ on two sides in $\partial E(K)$. Next,  if we have equality in Equation~\eqref{eq:slope-bound}, then $q=1$. When $q=1$, since $0 \leq s < q$, we see that $s$ must be $0$, and hence so must $n$.  Thus, we see that $F$ and $F'$ are both spanning surfaces.  Conversely, if $F$ is a spanning surface, then $q=1$ (and hence $s=0$), and if $F'$ is a spanning surface, then $1 = 2s+(2n+1)q = 2n+1$, and hence $n=0$, which yields equality in Equation~\eqref{eq:slope-bound}.
\end{proof}

\begin{proof}[Proof of Proposition~\ref{prop:incompr-constraint}]
	Suppose that $(e,b) \in R_3(K)$, i.e.\ that there is a spanning surface $F$ for $K$ with $e_3(F) = e$ and $b_1(F) = b$.  By a sequence of compressions and $\partial$-compressions 
	\[F = F_0 \to F_1 \to \cdots \to F_n = F',\]
we arrive at an imcompressible and $\partial$-incompressible surface $F'$ properly embedded in the exterior $E(K)$.  

We claim that $(e,b) \in \tilde{W}_{F'}$; the proposition will then follow. In fact, it suffices to show that the line segment $\ell_i$ joining $(e_3(F_i),b(F_i))$ to $(e_3(F_{i+1}),b(F_{i+1}))$ has slope at least $\frac{1}{2}$ in absolute value.  If $F_{i+1}$ is obtained from $F_i$ by a compression, then the boundary slopes of $F_i$ and $F_{i+1}$ agree, so the line segment $\ell_i$ has infinite slope.  If $F_{i+1}$ is obtained from $F_i$ by a $\partial$-compression, then $b_1(F_i) - b_1(F_{i+1}) = 1$ and Lemma~\ref{lem:slope-compression-bound} shows that $ |e_3(F_i)-e_3(F_{i+1})| \leq 2$. Thus, we have that the slope of $\ell_i$ is at least $\frac{1}{2}$ in absolute value.
\end{proof}

As a corollary of the proof, we obtain a bound on the values of $\Gamma^\pm$ of surfaces properly embedded in the exterior of $K$. 

\begin{cor} \label{cor:rational-bound}
	Given a knot $K$ and a properly embedded $F \subset E(K)$, we have
	\[\Gamma^\pm(F) \geq \min \{\Gamma^\pm(F')\;:\; F' \in \mathcal{F}_{\text{ess}}(K)\}.\]
\end{cor}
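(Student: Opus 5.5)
We will follow the proof of Proposition~\ref{prop:incompr-constraint}. Starting from $F$, we perform a sequence of compressions and $\partial$-compressions
\[F = F_0 \to F_1 \to \cdots \to F_n = F'\]
until we reach an incompressible and $\partial$-incompressible surface $F' \in \mathcal{F}_{\text{ess}}(K)$. Since $\sigma(K)$ depends only on $K$, we have $\Gamma^\pm(G) = b_1(G) \mp \frac{1}{2}e_3(G) + (\text{const})$ for any properly embedded surface $G$. It therefore suffices to show that each step does not increase $\Gamma^\pm$, that is,
\[b_1(F_{i+1}) \mp \tfrac12 e_3(F_{i+1}) \leq b_1(F_i) \mp \tfrac12 e_3(F_i).\]
Chaining these inequalities gives $\Gamma^\pm(F) \geq \Gamma^\pm(F') \geq \min\{\Gamma^\pm(G) : G \in \mathcal{F}_{\text{ess}}(K)\}$.

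\emph{Compression step.} Here the boundary is unchanged, so $e_3$ is unchanged. Surgery along a disk does not increase $b_1$. Cutting a nonseparating curve lowers $b_1$ by $2$. Cutting a separating curve may split off a closed component; if needed we discard closed components, and in any case $b_1$ does not go up. So $\Gamma^\pm$ does not increase.

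\emph{$\partial$-compression step.} This cuts the surface along an arc, so $b_1(F_{i+1}) = b_1(F_i) - 1$ when the arc is essential. Lemma~\ref{lem:slope-compression-bound} gives $|e_3(F_{i+1}) - e_3(F_i)| \leq 2$. Hence the change in $\mp\frac12 e_3$ is at most $1$, and it is offset by the drop of $1$ in $b_1$.

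This is the same slope-$\geq \frac12$ observation used in the proof of Proposition~\ref{prop:incompr-constraint}. In geographic terms, $(e_3(F), b_1(F))$ lies in $\tilde W_{F'}$, and $\Gamma^\pm$ is monotone on the wedge lines.

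The main obstacle is the bookkeeping of $b_1$ and $\sigma$ once the surfaces are no longer spanning surfaces. $F_i$ may have several boundary components or be disconnected, and $e_3$ is a rational slope. We read $\Gamma^\pm$ formally, with $e_3$ as the negated slope and $\sigma(K)$ as a fixed constant, exactly as extended in Section~\ref{ssec:essential}. Then we only need the inequality $\Delta b_1 \leq -1$ for a $\partial$-compression. For an arc-surgery this holds because the Euler characteristic rises by $1$ and the boundary component count changes by at most $1$. We would verify this with the convention, implicit in the proof of Proposition~\ref{prop:incompr-constraint}, that $b_1$ drops by exactly $1$ in that step, and then cite that proof directly.
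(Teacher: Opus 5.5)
Your overall argument is the one the paper intends. The corollary is read off from the proof of Proposition~\ref{prop:incompr-constraint}: compressions give vertical segments in the $(e,b)$-plane, and $\partial$-compressions give segments of slope at least $\frac12$ in absolute value. That is exactly the statement that $\Gamma^\pm = b_1 \mp \frac12 e + \mathrm{const}$ never increases along the sequence. Your treatment of compressions is fine.

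The step that fails as you state it is the claim that $\Delta b_1 \le -1$ for every $\partial$-compression, which you justify by ``$\chi$ rises by $1$ and the boundary component count changes by at most $1$.'' For a surface all of whose components have boundary, $b_1 = b_0 - \chi$. So cutting along the arc $\alpha = D \cap F$ gives $\Delta b_1 = \Delta b_0 - 1$. The number of boundary circles plays no role, and if $\alpha$ separates its component then $\Delta b_1 = 0$. This can happen for an essential arc and a nontrivial $\partial$-compressing disk, e.g.\ when $\beta$ meets $\partial F$ from one side and $\alpha$ cuts off a handle of $F$ sitting near $K$. So ``essential'' is the wrong hypothesis. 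You also cannot simply import the assertion in the proof of Proposition~\ref{prop:incompr-constraint} that $b_1$ drops by exactly $1$, because it holds only in one of the two cases.

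The repair is the case split in the proof of Lemma~\ref{lem:slope-compression-bound}. If $\beta$ meets $\partial F$ from one side, the boundary slope is unchanged, so $\Delta b_1 \le 0$ suffices. If $\beta$ meets $\partial F$ from two sides, the new boundary is a single curve (of class $2\mu + (2n+1)\lambda$ after normalizing). A separating $\alpha$ would instead leave each of the two pieces with its own boundary circle. Hence $\alpha$ is nonseparating, $\Delta b_1 = -1$, and this offsets $|\Delta e| \le 2$. With that substitution your proof is complete to the same extent as the paper's. Both, like the lemma, effectively treat only surfaces with a single essential boundary curve.
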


%% file: geography4.tex
%!TEX root = non-ori-3d-4d.tex

Our next step is to describe the framework necessary to investigate the non-orientable $4$-genus of a knot.  In parallel to the $3$-dimensional setting, we first review the normal Euler number and Allen's geography problem in Section~\ref{ssec:euler}.  We finish the section by describing lower bounds on the non-orientable $4$-genus in Sections~\ref{ssec:signature} and \ref{ssec:bound}, including a proof of Proposition~\ref{prop:oss-sg}. 

% *****
\subsection{Normal Euler Number and $4$-Dimensional Geography}
\label{ssec:euler}

As in the $3$-dimensional setting, non-orientable fillings carry a second homological invariant: the normal Euler number, which captures the topology of the normal bundle.  We present two equivalent definitions of the normal Euler number of a \textbf{cobordism} between two links.  A cobordism is a properly embedded, compact surface $F \subset [0,1] \times S^3$ with boundary $\partial F = K_0 \cup K_1$ such that $K_i \subset \{i\} \times S^3$.  A filling for a knot $K$ is a cobordism with $K_1 = K$ and $K_0 = \emptyset$.

First, following \cite{gl:signature}, let $F'$ be the result of pushing $F$ along a nonzero section of its normal bundle.  Let $\partial F' = K_0' \sqcup K'_1$, and define the \dfn{normal Euler number} of $F$ by
\begin{equation} \label{eq:e-lk-4d}
	e_4(F) = \lk(K_0,K_0') - \lk(K_1,K_1').
\end{equation}
In the case where the cobordism $F$ arises from attaching a flat band to a diagram $D_0$ to obtain $D_1$, the normal Euler number may be computed using the blackboard framing in the formula in \cite[Lemma 4.2]{oss:unoriented}:
\begin{equation} \label{eq:e-wr-4d}
	e_4(F) = \wri(D_0) - \wri(D_1).
\end{equation}

Second, following \cite{oss:unoriented}, let $F''$ be a small transverse push-off of $F$ so that the pushoffs at the ends $K''_i$ both realize the Seifert framings of $K_i$.  The normal Euler number $e_4(F)$ may  be computed by choosing compatible local orientations of $T_xF$ and of $T_xF''$ at each point $x \in F \cap F''$, then comparing the resulting orientation on $T_xF \oplus T_xF''$ to the ambient orientation on $[0,1] \times S^3$, and finally adding up these local contributions.   

It is straightforward to check using Equation~\eqref{eq:e-lk-4d} that the normal Euler number of a spanning surface $F$ that has been pushed into $B^4$ agrees, up to a sign, with normal Euler number of $F$ defined in Section~\ref{ssec:bs}.  More precisely, we have:

\begin{lem} \label{lem:normal-euler-3-4}
	If $F$ is a spanning surface of a knot $K$, and $F'$ is the result of pushing $F$ into $B^4$ relative to $K$, then
	\[e_4(F') = e_3(F) = - s(F).\]
\end{lem}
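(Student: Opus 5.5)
We use the linking-number definition of the normal Euler number, Equation~\eqref{eq:e-lk-4d}, and view the pushed-in surface $F'$ as a filling of $K$, that is, a cobordism from $K_0=\emptyset$ to $K_1=K$. Since $K_0$ is empty, the formula reduces to
\[ e_4(F') = -\lk(K,K''), \]
where $F''$ is obtained by pushing $F'$ off itself along a nowhere-vanishing section of its normal bundle, and $K''=\partial F''$ is the resulting pushoff of $K$ in $S^3=\partial B^4$. The whole proof then amounts to identifying $K''$ with the pushoff $K'$ of $K$ along $F$ used to define $s(F)=\lk(K,K')$. Once that is done,
\[ e_4(F')=-\lk(K,K')=-s(F)=e_3(F). \]

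To build the section, model the push-in concretely. Take a collar $S^3\times[0,\epsilon]\subset B^4$, with the radial coordinate $t$ pointing inward. Let $\rho\colon F\to[0,\epsilon]$ be a function that vanishes exactly on $\partial F=K$, and set
\[ F'=\{(x,\rho(x)) : x\in F\}. \]
At a point of $F'$, the normal bundle of $F'$ in $B^4$ is spanned by the normal line $\nu$ of $F$ in $S^3$, which need not be trivial, together with one further direction coming from the graph (roughly $\partial_t$, corrected by $d\rho$). Along the interior, that further direction can be taken to be a globally defined, nonvanishing normal vector field $v$ to $F'$, pointing "deeper into" $B^4$ after the graph correction. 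Near the boundary, tilt $v$ so that at $K$ it becomes the inward-pointing tangent vector to $F$ in $S^3$. Concretely, at boundary points $v$ is the vector $-\partial_t$ combined with the inward tangent direction of $F$, chosen so that it is transverse to $TF'$.

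With this section, $F''$ meets $S^3$ in the curve obtained by pushing $K$ slightly along $F$, which is exactly $K'$. Since $\lk(K,K'')$ depends only on the framing that the section induces on $K$, we get $\lk(K,K'')=\lk(K,K')$, and the formula above follows.

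The main obstacle is the boundary behaviour: we must check that the tilted vector field really is normal to $F'$ and nonvanishing along $K$. The cleanest way is to take $\rho$ to be a collar coordinate near $K$, so that $F'$ near $K$ looks like $K\times\{(s,s)\}$ in coordinates $(\text{inward along }F,\ t)$. Then $(1,-1)$ in those coordinates is normal to $TF'$, and its $S^3$-component is the inward tangent direction of $F$. This exhibits the induced framing on $K$ as the surface framing. The sign conventions then match those in Section~\ref{ssec:bs}, giving the stated equality.
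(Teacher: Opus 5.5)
Your proposal is correct and takes essentially the paper's route. The paper only asserts that the lemma is a straightforward check from Equation~\eqref{eq:e-lk-4d}; you carry out that check by exhibiting a nonvanishing normal section of the pushed-in surface that induces the surface framing on $K$, which gives $e_4(F')=-\lk(K,K')=-s(F)$.

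One small tidy-up. As written, your interior field points ``deeper into'' $B^4$, i.e.\ along $+\partial_t$, while your boundary vector $(1,-1)$ is the normal component of $-\partial_t$; the two descriptions have opposite signs, so the ``tilt'' between them is left unexplained. Instead, take the single global section $[-\partial_t]$ of the normal bundle. It never vanishes, because $\partial_t$ is never tangent to the graph $F'$. Near $K$ it is represented, modulo $TF'$, by the inward tangent of $F$ in $S^3$, so no boundary adjustment is needed.
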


As a result of this lemma, we no longer distinguish between the $3$- and $4$-dimensional normal Euler numbers of a surface $F$, denoting both by $e(F)$.

The normal Euler number satisfies several useful properties:  it is always even, it vanishes on orientable surfaces, it is additive under concatenation of cobordisms, and, as shown in \cite{Massey:Euler}, it satisfies
\begin{equation}
	e(F) \equiv 2b_1(F) \mod 4.
\end{equation}

\begin{defn} [\cite{allen:geography}] \label{defn:4Dgeography} 
The \dfn{four-dimensional geography} of a knot $K$ is the subset $R_4(K) \subset 2\zz \times \nn$ of all pairs $(e(F), b_1(F))$ for $F \in \spanning_4(K)$.   
\end{defn} 

We may translate the addition of a twisted band from the $3$- to the $4$-dimensional setting.  The definition of a wedge for the four-dimensional geography is equivalent to that presented in Section \ref{ssec:bs}. 

\begin{ex}
That the three- and four-dimensional geographies of the trefoil are identical follows from our work from Section \ref{sec:geography3} and Proposition 3.1 in \cite{allen:geography}. The $5_2$ knot, however, provides an example where the $3$-dimensional geography is a strict subset of the $4$-dimensional geography. In Figure~\ref{fig:52-band}, we display a filling $F$ of the $5_2$ knot arising from a flat band move.  Using Equation~\eqref{eq:e-wr-4d}, we compute that $e(F)=2$ and $b_1(F)=1$.  Thus, we see that the point $(2,1) \in R_4(K)$.  On the other hand, using the algorithm of Adams and Kindred \cite{ak:alternating}, the point $(2,1)$ is not in $R_3(K)$.  
\end{ex}

\begin{figure}
\labellist
\small\hair 2pt
 \pinlabel {$\simeq$} [ ] at 126 34
\endlabellist
\begin{center}
\includegraphics{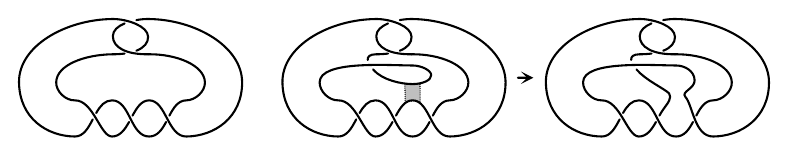}
\caption{A M\"obius strip filling of the $5_2$ knot with normal Euler number 2 arises from an isotopy, an attachment of a flat band, and finally an isotopy to the standard unknot.}
\label{fig:52-band}
\end{center}
\end{figure}

% *****
\subsection{Signature and the Gordon-Litherland Bound}
\label{ssec:signature}

To complement the constructions of non-orientable spanning surfaces and fillings discussed thus far, we now turn to lower bounds. We begin by recalling Gordon and Litherland's symmetric bilinear pairing $\langle,\rangle_F$ on the first homology of a compact embedded surface $F \subset S^3$.  The unit normal bundle $\nu_1(F)$ is a 2-fold cover $p: \nu_1(F) \to F$ and embeds into $S^3 \setminus F$.  For a pair of classes $a,b \in H_1(F)$ represented by embedded, oriented multicurves $\alpha, \beta \subset F$, define
\[\langle a,b \rangle_F = \lk (\alpha, p^{-1}\beta),\]
which yields a well-defined symmetric bilinear pairing \cite{gl:signature}. The signature $\sigma_{gl}$ of the pairing $\langle,\rangle_F$ is related to the signature of $K$ and the normal Euler number of $F$.

\begin{prop}[Corollary 5 of \cite{gl:signature}] \label{prop:gl-sign-sign-euler}
	If $F$ is a spanning surface of a knot $K$, then
	\[\sigma(K) = \sigma_{gl}(F) +\frac{e(F)}{2}.\]
\end{prop}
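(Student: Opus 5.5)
The plan is to prove the Gordon--Litherland formula by comparing the signature of $K$, computed from a Seifert surface, with the Gordon--Litherland form of $F$. Both surfaces will be pushed into $B^4$ and the comparison made in the branched double cover. Recall the Gordon--Litherland setup: if $F\subset S^3$ is a spanning surface and $F_\circ\subset B^4$ is its pushed-in copy, then the double cover $\Sigma(F)$ of $B^4$ branched along $F_\circ$ has intersection form isomorphic to $\langle,\rangle_F$. The first step is to prove this isomorphism. I would use the standard handle decomposition: $\Sigma(F)$ is obtained by cutting $B^4$ along the trace of the push-in and gluing two copies. This gives $H_2(\Sigma(F))\cong H_1(F)$, since each curve $\alpha\subset F$ bounds a cone in $B^4$ whose lift closes up to a $2$-cycle. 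Computing the intersection of two lifts amounts to linking $\alpha$ with the double push-off $p^{-1}\beta$, which is exactly $\langle a,b\rangle_F$. Hence $\sigma(\Sigma(F))=\sigma_{gl}(F)$.

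The second step is to apply the $G$-signature theorem, or equivalently Rokhlin's/Viro's formula for branched double covers. For a closed $4$-manifold $X$ with involution whose fixed set is a surface $\Phi$, this theorem gives $\sigma(X)=2\sigma(X/\tau)-\tfrac12 e(\Phi)$. I would apply it to the closed manifold obtained by gluing $\Sigma(F)$ to $-\Sigma(S)$ along their common boundary, the double branched cover of $S^3$ over $K$. Here $S$ is a Seifert surface, and both surfaces are pushed in. The quotient is $B^4\cup -B^4=S^4$, which has signature $0$. The fixed surface is $F_\circ\cup -S_\circ$, a closed surface in $S^4$. Its normal Euler number is $e(F)-e(S)=e(F)$, because the normal Euler numbers of the pieces add along $K$ when both restrict to the Seifert framing, which is the convention in Equation~\eqref{eq:e-lk-4d}. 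Lemma~\ref{lem:normal-euler-3-4} identifies this with the $3$-dimensional $e(F)$. Novikov additivity then gives $\sigma_{gl}(F)-\sigma_{gl}(S)=-\tfrac12 e(F)$.

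The third step is to identify $\sigma_{gl}(S)$ for a Seifert surface. In that case $\langle a,b\rangle_S=\lk(\alpha,\beta^+)+\lk(\alpha,\beta^-)$, which is the symmetrized Seifert form $V+V^T$. Its signature is $\sigma(K)$ by definition. Combining the steps gives $\sigma(K)=\sigma_{gl}(F)+\tfrac12 e(F)$.

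The main obstacle is the sign and normalization bookkeeping in the second step. One has to check that the $G$-signature correction is $-\tfrac12 e$ with the paper's orientation conventions, and that $e$ of the glued closed surface really equals $e(F)$; the normal Euler number of a closed surface in $S^4$ is defined with the same sign as $e_4$. A fallback avoids the $G$-signature theorem altogether. Any two spanning surfaces are related by isotopy, $1$-handle (tube) additions, and twisted band additions $\twist_\pm$. So it suffices to check the formula on a Seifert surface and then check that it is invariant under these moves. A tube adds a hyperbolic summand, so it changes neither $\sigma_{gl}$ nor $e$. A $\twist_\pm$ move adds a $\mp1$ diagonal summand and changes $e$ by $\pm2$, so the right-hand side is preserved.
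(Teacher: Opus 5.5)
Your proposal is correct. Its main route is exactly the original Gordon--Litherland argument behind the cited Corollary~5, which the paper quotes rather than reproves: identify the intersection form of the double cover of $B^4$ branched over the pushed-in $F$ with $\langle\,,\rangle_F$, apply the $G$-signature formula $\sigma(X)=2\sigma(X/\tau)-\frac{1}{2}e(\Phi)$ to $\Sigma(F)\cup-\Sigma(S)$ over $S^4$, and use $\langle\,,\rangle_S=V+V^T$ for a Seifert surface $S$ (the signs you were worried about do come out right with the paper's convention $e=-s$). Your fallback, which checks that $\sigma_{gl}(F)+\frac{1}{2}e(F)$ is unchanged under tubing and $\twist_\pm$, is also sound, but it depends on the nontrivial theorem that any two spanning surfaces are related by these moves (Gordon--Litherland; Bar-Natan--Fulman--Kauffman), so you need to cite that theorem rather than assert it.
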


In particular, as noted in the introduction, we can rewrite the normalization in $\Gamma^\pm(F)$ as $\sigma_{gl}(F)$ rather than $\sigma(K) - \frac{e(F)}{2}$.

As a corollary, we obtain what is commonly termed the Gordon-Litherland inequality; see Corollary 2.5 of \cite{allen:geography}.

\begin{thm} \label{thm:GL}
Let $K \subset S^3$ be a knot and let $F$ be a filling of $K$.  Then
\[\left| \sigma(K) - \frac{e(F)}{2} \right| \leq b_1(F).\]
\end{thm}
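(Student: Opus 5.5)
The plan is to reduce the four-dimensional statement to Gordon and Litherland's original four-dimensional signature formula. Let $F \subset B^4$ be a filling of $K$, and let $X_F$ be the double branched cover of $B^4$ along $F$, a compact oriented $4$-manifold with boundary the double branched cover $\Sigma(K)$ of $S^3$ along $K$. Gordon and Litherland prove that
\[\sigma(K) = \sigma(X_F) + \tfrac{1}{2}e(F)\]
for any properly embedded (possibly non-orientable) surface in $B^4$ bounded by $K$. Proposition~\ref{prop:gl-sign-sign-euler} is the special case of a pushed-in spanning surface, where $\sigma(X_F)=\sigma_{gl}(F)$. For the general statement I would cite this formula, which is Theorem~2 of \cite{gl:signature} as opposed to Corollary~5. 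I would not rederive it; its proof uses the $G$-signature theorem (or Novikov additivity) on the branched cover.

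Granting the formula, it remains to bound $|\sigma(X_F)|$ by $b_2(X_F)$ and to compute $b_2(X_F)=b_1(F)$. The signature of any compact $4$-manifold is at most its second Betti number in absolute value, since the intersection form lives on $H_2(X_F;\rr)$.

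For the Betti number computation I would work with rational (or real) coefficients. A transfer argument for the double branched cover gives
\[H_*(X_F;\qq) \cong H_*(B^4;\qq)\oplus H_*(X_F;\qq)^{-},\]
where the second summand is the $(-1)$-eigenspace of the covering involution. Thus $\chi(X_F) = 2\chi(B^4) - \chi(F) = 2 - (1 - b_1(F)) = 1 + b_1(F)$. Because $X_F$ is built from a handle decomposition of $B^4$ relative to $F$, one expects $b_1(X_F;\qq)=0$ and $b_3(X_F;\qq)=0$. The first vanishing follows from the transfer, since $H_1(B^4\setminus F;\qq)$ is generated by the meridian class on which the involution acts appropriately. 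The second follows by duality with $H_1(X_F,\partial X_F)$ together with $\Sigma(K)$ being a rational homology sphere, because $\det K$ is odd. Hence $b_2(X_F)=b_1(F)$, and combining gives
\[\left|\sigma(K) - \tfrac{e(F)}{2}\right| = |\sigma(X_F)| \le b_1(F).\]

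The main obstacle is the step citing the Gordon--Litherland formula with the correct sign conventions. The paper's normal Euler number convention, fixed by Lemma~\ref{lem:normal-euler-3-4} and Equation~\eqref{eq:e-lk-4d}, has to match the convention in \cite{gl:signature}. As a check, I would verify consistency on the pushed-in spanning surface case using Proposition~\ref{prop:gl-sign-sign-euler}. Since the inequality is symmetric under the absolute value only after the signs of $\sigma$ and $e$ are aligned, this consistency check is the essential point. Everything else is standard homological bookkeeping; alternatively, one can simply quote Corollary~2.5 of \cite{allen:geography}.
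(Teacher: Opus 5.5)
Your route is the standard one, and it is what lies behind the paper's justification, which is only a citation of Corollary~2.5 of \cite{allen:geography}: the four-dimensional Gordon--Litherland formula $\sigma(K)=\sigma(X_F)+\tfrac12 e(F)$ combined with $b_2(X_F)=b_1(F)$. You are right that the needed input is the $B^4$ version of the formula rather than Proposition~\ref{prop:gl-sign-sign-euler}, which only treats pushed-in spanning surfaces. Your proposed sign check against that proposition is adequate.

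The step that does not hold up as written is $b_1(X_F;\qq)=0$, and it is load-bearing. Without it, your Euler characteristic and duality computation only gives $b_2(X_F)=b_1(F)+2b_1(X_F)$.

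\emph{Why the transfer does not suffice.} The transfer identifies only the $\tau$-invariant part of $H_1(X_F;\qq)$ with $H_1(B^4;\qq)=0$. In fact $1+\tau=\mathrm{tr}\circ p_*=0$ on $H_1(X_F;\zz)$, so all of $H_1(X_F)$ is anti-invariant and the transfer says nothing about it. Moreover $H_1(B^4\setminus F;\zz)\cong\zz/2$ when $F$ is non-orientable, so your remark about the meridian class carries no rational information.

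\emph{The missing idea.} Since $B^4$ is simply connected and $F$ is connected, $G=\pi_1(B^4\setminus F)$ is normally generated by a meridian $\mu$.
\begin{enumerate}
\item We have $\pi_1(X_F)\cong H/N$, where $H\subset G$ is the index-two subgroup and $N$ is the normal closure of $\mu^2$.
\item Then $\bar G=G/N$ is a semidirect product of $\bar H=H/N$ with $\langle\bar\mu\rangle\cong\zz/2$.
\item $\bar H$ is generated by the elements $\bar\mu c$ with $c$ conjugate to $\bar\mu$, and conjugation by $\bar\mu$ inverts each of them. Hence $\bar G^{ab}\cong H_1(X_F)/2H_1(X_F)\oplus\zz/2$.
\item But $\bar G^{ab}$ is generated by the class of $\mu$, which has order at most $2$.
\end{enumerate}
So $H_1(X_F)\otimes\zz/2=0$, and $H_1(X_F;\zz)$ is finite of odd order.

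Connectedness of $F$ is essential here; you also use it implicitly in $\chi(F)=1-b_1(F)$. With this step repaired, the rest of your argument goes through.
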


Reinterpreting this inequality in terms of the Euler-normalized first Betti number, we obtain, for any filling $F$, the inequality
\[\Gamma^\pm_4(F) \geq 0,\]
hence justifying taking the minimum in the definition of the $\pm$-spanning surface defect.

The Gordon-Litherland inequality restricts the $3$- and $4$-dimensional geography of a knot $K$ to lie in the wedge $W_\sigma(K)$ based at $(2\sigma(K),0)$; see Figure~\ref{fig:bound-wedges}. As noted in the introduction, the spanning surface defects measure the minimum vertical distance from $R_*(K)$ to the boundary of $W_\sigma(K)$.

\begin{figure}
\labellist
\small\hair 2pt
 \pinlabel {$e$} [l] at 193 19
 \pinlabel {$b$} [b] at 171 94
 \pinlabel {$4\upsilon$} [t] at 99 14
 \pinlabel {$2\sigma$} [t] at 62 14
 \pinlabel {$\sigma-2\upsilon$} [b] at 117 74
 \pinlabel {$2\upsilon-\sigma$} [b] at 54 56
\endlabellist
\begin{center}
\includegraphics{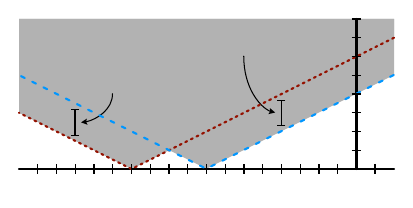}
\caption{The wedges $W_\sigma(K)$ and $W_\upsilon(K)$  restrict the $4$-dimensional (and hence $3$-dimensional) non-orientable geography of a knot $K$. The bound in Corollary~\ref{cor:oss} is the height of the vertex of $W_\sigma(K) \cap W_\upsilon(K)$, while the vertical distances between the edges of the wedges yield the bounds in Proposition~\ref{prop:oss-sg}.  The values for $\sigma$ and $\upsilon$ are taken from $T(4,3)$.}
\label{fig:bound-wedges}
\end{center}
\end{figure}

% *****
\subsection{Further Bounds}
\label{ssec:bound}

The Gordon-Litherland inequality is not the only bound on $b_1(F)$ and $e(F)$.  In fact, there is a class of what Sato \cite{sato:unori-slice-torus} calls ``non-orientable slice-torus invariants'' that yield structurally similar bounds.  Of importance in this paper is the $\upsilon$ invariant derived from Heegaard-Floer homology.

\begin{thm}[Theorem 1.1 of \cite{oss:unoriented}]
\label{thm:oss}
	For any filling $F$ of a knot $K$, we have
	\[\left| 2\upsilon(K) - \frac{e(F)}{2}\right| \leq b_1(F).\]
\end{thm}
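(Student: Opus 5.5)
The plan is to follow the strategy of \cite{oss:unoriented}. I would realize $\upsilon(K)$ as a grading invariant of the unoriented knot Floer complex. Concretely, $\upsilon(K)$ is (up to the normalization used there) the maximal grading of a homogeneous non-torsion element of the $\mathbb{F}[U]$-module $HFK'(K)$. Note that $HFK'(K)$ is the homology of the $t=1$ specialization of the $t$-modified knot Floer complex, equipped with the $\delta$-type grading. The inequality $\left|2\upsilon(K) - \tfrac{e(F)}{2}\right| \le b_1(F)$ then splits into the two one-sided bounds
\[
2\upsilon(K) - \tfrac{e(F)}{2} \le b_1(F) \quad\text{and}\quad \tfrac{e(F)}{2} - 2\upsilon(K) \le b_1(F).
\]
I would derive both from $\mathbb{F}[U]$-module maps induced by the filling.

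First I would put $F$ in Morse position with respect to the radial function on $B^4$. Removing a small ball around the first minimum makes $F$ a cobordism from the unknot $O$ to $K$. This cobordism is built from births, deaths, and saddle (band) moves; after rearranging, I would arrange that all births come first, all deaths last, and the $b_1(F)$ non-trivial saddles in between. Between consecutive critical levels we have link diagrams $D_i$. By Equation~\eqref{eq:e-wr-4d} and additivity of $e$ under concatenation, the normal Euler number is the sum $\sum_i (\wri(D_{i}) - \wri(D_{i+1}))$ of writhe changes across the band moves. Births and deaths contribute nothing to $e$ or to $b_1$.

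The key local step is a grading-shift estimate for each elementary piece. For a band move $D_i \to D_{i+1}$ I would construct maps
\[
HFK'(D_i) \to HFK'(D_{i+1}) \quad\text{and}\quad HFK'(D_{i+1}) \to HFK'(D_i)
\]
whose composite is multiplication by a power of $U$, in either order. The homogeneous degrees of these maps should be of the form $\pm\tfrac14\bigl(\wri(D_i)-\wri(D_{i+1})\bigr) - \tfrac12$. I would obtain them from a local Heegaard diagram that realizes the band as a skein-type triangle, reading the grading shift off the framing change. Births and deaths should give maps of degree $0$, through the usual (de)stabilization of the link Floer complex. Composing along the movie shows that the image of the non-torsion generator of $HFK'(O)$, whose grading is $0$, is non-torsion in $HFK'(K)$. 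Its grading is at least $\tfrac14 e(F) - \tfrac12 b_1(F)$, which gives one inequality. Running the reversed cobordism (from $K$ to $O$) with the dual maps gives the other. Multiplying by $2$ with the paper's normalization of $\upsilon$ yields the displayed statement.

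I expect the main obstacle to be the grading-shift computation for a non-orientable band. There the $\delta$-grading (Alexander minus Maslov) is not obviously preserved, and the orientation on $D_{i+1}$ is not induced from $D_i$. My first approach is to write the non-orientable band move as an oriented saddle followed by reversing the orientation of one component. I would then track how the Alexander grading changes under that reversal using linking numbers, which produces exactly the writhe correction. A consistency check for the constants is the M\"obius band filling of a trefoil or of $T(2,2k+1)$, where the bound should be sharp.
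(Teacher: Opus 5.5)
The paper gives no argument for this statement; it imports Theorem 1.1 of \cite{oss:unoriented}. Your architecture matches the one used there: $\upsilon$ as the top $\delta$-degree of a non-torsion class, band maps $\sigma,\mu$ with $\mu\sigma=\sigma\mu=U$ and degrees $-\tfrac12\pm\tfrac14(\wri(D_i)-\wri(D_{i+1}))$, and tracking the tower generator forwards and backwards. The gap is in the births and deaths: as written, your bookkeeping does not produce the bound.

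\emph{The saddle count.} After deleting the first minimum, suppose $F$ has $m$ further minima and $M$ maxima. Then $\chi(F)=1-b_1(F)$ forces $b_1(F)+m+M$ bands, not $b_1(F)$. With every band of degree $\tfrac{e_i}{4}-\tfrac12$ and births and deaths of degree $0$, your composite only gives $\tfrac{e(F)}{4}-\tfrac{b_1(F)+m+M}{2}$.

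\emph{No renormalization fixes this.} Suppose passing from $L$ to $L\sqcup O$ raises the top free degree by $c$, and a split has $\deg\sigma=a$, $\deg\mu=-1-a$. Then birth-then-merge is lossless only if $c=1+a$, while split-then-death is lossless only if $c=a$; both cannot hold.

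\emph{Deaths and ranks.} A death $HFK'(L)\otimes V\to HFK'(L)$ kills half the free part, so $\mu\sigma=\sigma\mu=U$ gives no reason for your tracked class to survive it. Births and deaths are also not (de)stabilizations. Finally, maps with $\mu\sigma=\sigma\mu=U^k$ force equal $\mathbb{F}[U]$-ranks. So split and merge maps exist only on complexes with a fixed number of basepoints (e.g.\ fixed-size grids), not between $HFK'$ of links with different numbers of components, whose towers have rank $2^{\ell-1}$.

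The missing idea is a decomposition that isolates exactly $b_1(F)$ bands between knots.
\begin{itemize}
\item Push minima down and maxima up, and attach all bands at one level.
\item Since $F$ is connected, some $m$ bands form a spanning tree joining the newborn unknots to the main component. Attaching these first gives a knot $J$.
\item Dually, the remaining bands contain $M$ whose duals join the dying unknots to the rest. Attaching these last splits a knot $J'$ into $K\sqcup O^{M}$.
\end{itemize}
So $F$ is a ribbon disk for $J$, then exactly $b_1(F)$ bands from $J$ to $J'$, then the reverse of a ribbon concordance from $K$ to $J'$. The end pieces are orientable, so the middle piece carries all of $e(F)$. Concordance invariance of $\upsilon$ gives $\upsilon(J)=0$ and $\upsilon(J')=\upsilon(K)$, and your saddle argument then runs on the middle piece alone. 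Alternatively, in a fixed-basepoint model a knot's free part sits entirely in degree $\upsilon$, so you can track the maximal free degree through the first piece and the minimal one through the last.

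Separately, your proposed treatment of non-orientable bands cannot work as stated. A band that is incoherent for every orientation has both feet on one component and preserves the number of components, whereas an oriented saddle changes it by one. What you actually need to compare is an orientation of $D_{i+1}$ that differs from that of $D_i$ along an arc, not a whole component; that comparison is what produces the writhe term.
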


As with the Gordon-Litherland inequality, Theorem~\ref{thm:oss} restricts the possible pairs $(e,b)$ for surfaces in $B^4$ spanning a given knot $K$ to a wedge $W_\upsilon(K)$ based at $(4\upsilon(K),0)$; see Figure~\ref{fig:bound-wedges} again.

The standard next step is to combine Theorems~\ref{thm:GL} and \ref{thm:oss} to obtain a lower bound on the non-orientable $4$-genus.  

\begin{cor}[Theorem 1.2 of \cite{oss:unoriented}]
\label{cor:oss}
	For any knot $K$, we have
	\[\left| \upsilon(K) - \frac{\sigma(K)}{2}\right| \leq \gamma_4(K).\]
\end{cor}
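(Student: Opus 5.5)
The plan is to add the two wedge inequalities, Theorem~\ref{thm:GL} and Theorem~\ref{thm:oss}, and then minimize over fillings. Fix a filling $F$ of $K$ and write $x = \frac{e(F)}{2}$ and $b = b_1(F)$. Theorem~\ref{thm:GL} gives $|\sigma(K) - x| \leq b$, and Theorem~\ref{thm:oss} gives $|2\upsilon(K) - x| \leq b$. Geometrically, $(e(F),b)$ lies in $W_\sigma(K)\cap W_\upsilon(K)$, and the claimed bound is the height of the lowest vertex of this intersection, as in Figure~\ref{fig:bound-wedges}.

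First apply the triangle inequality:
\[|\sigma(K) - 2\upsilon(K)| \leq |\sigma(K) - x| + |x - 2\upsilon(K)| \leq 2b.\]
Dividing by $2$ gives $\left|\upsilon(K) - \frac{\sigma(K)}{2}\right| \leq b_1(F)$. This holds for every $F \in \spanning_4(K)$, so taking the minimum over all fillings gives $\left|\upsilon(K) - \frac{\sigma(K)}{2}\right| \leq \gamma_4(K)$, which is the statement.

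No step is a real obstacle once the two theorems are granted. The only care needed is the normalization: both inequalities must be stated with the same quantity $e(F)/2$, so that the $x$ terms cancel in the triangle inequality. Theorem~\ref{thm:oss} uses the same normal Euler number convention as Theorem~\ref{thm:GL}, which Lemma~\ref{lem:normal-euler-3-4} and the discussion in Section~\ref{ssec:euler} reconcile. If the conventions disagreed by a sign, the vertex of the intersection would move but its height would not change, so the conclusion is insensitive to this.
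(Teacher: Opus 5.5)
Your proof is correct and is essentially the paper's own argument: combining Theorems~\ref{thm:GL} and \ref{thm:oss} via the triangle inequality and minimizing $b_1(F)$ over fillings is exactly the wedge-vertex computation the paper describes. One correction to your closing aside: the shared sign convention for $e(F)$ is genuinely needed, since flipping the sign in only one of the two inequalities would give $\left|\upsilon(K) + \frac{\sigma(K)}{2}\right| \leq \gamma_4(K)$, which fails for $T(4,3)$ (where $\sigma = -6$, $\upsilon = -2$, $\gamma_4 = 1$); only flipping both at once leaves the conclusion unchanged.
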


We are now ready to prove Proposition~\ref{prop:oss-sg}. The bound in Corollary~\ref{cor:oss} can be visualized as the height of the base of the wedge $W_\sigma(K) \cap W_\upsilon(K)$; see Figure~\ref{fig:bound-wedges} once again.  Inspired by this visualization, we may also extract a lower bound on the Euler-normalized non-orientable $4$-genus, as the quantity $| 2\upsilon(K) - \sigma(K)|$ measures the vertical distance between the boundaries of $W_\sigma(K)$ and  $W_\upsilon(K)$.  The proposition follows.

%% file: torus.tex
%!TEX root = non-ori-3d-4d.tex

Before tackling the main results of the paper, we take a motivational interlude to briefly discuss torus knots. Theorem~\ref{thm:alternating} implies that $3$- and $4$-dimensional spanning surface defects agree for alternating knots. The torus knot $T(4,3)$ displays similar behavior, even though its ordinary $3$- and $4$-dimensional genera differ.  Further, as with alternating knots, the bound in Theorem~\ref{thm:turaev} for $T(4,3)$ is sharp, though non-zero.

\begin{prop} \label{prop:torus}
	The torus knot $T4,3)$ has the following properties:
	\begin{enumerate}
	\item $\gamma_3(T(4,3)) > \gamma_4(T(4,3))$,
	\item $\sg^+_3(T(4,3)) = 0 = \sg^+_4(T(4,3)$ while $\sg^-_3(T(4,3)) = 2 = \sg^-_4(T(4,3)$, and
	\item $g_T(T(4,3)) = 1 = \asg_3(T(4,3))$.
	\end{enumerate}
\end{prop}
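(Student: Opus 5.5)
The plan is to pin down $\sg^\pm_4(T(4,3))$ from below using Proposition~\ref{prop:oss-sg}, and to get matching upper bounds from explicit surfaces. Most of those surfaces will be spanning surfaces in $S^3$, which also give bounds in $B^4$. I work with the convention in which the positive torus knot has $\sigma(T(4,3)) = -6$ and $\upsilon(T(4,3)) = -2$; these are the values depicted in Figure~\ref{fig:bound-wedges}. Proposition~\ref{prop:oss-sg} then gives $\sg^-_4 \geq 2\upsilon - \sigma = 2$, while its bound for $\sg^+_4$ is only $\max\{\sigma - 2\upsilon, 0\} = 0$. Every spanning surface pushes into $B^4$ with the same $(e,b_1)$ by Lemma~\ref{lem:normal-euler-3-4}, so $\sg^\pm_3 \geq \sg^\pm_4$. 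Hence it suffices to show $\sg^+_3 \leq 0$ and $\sg^-_3 \leq 2$.

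For the upper bounds I will exhibit spanning surfaces.
\begin{itemize}
\item For $\Gamma^+$: take the genus-$3$ Seifert surface, with $b_1 = 6$ and $e = 0$. It gives $\Gamma^+ = 6 + (-6 - 0) = 0$, so $\sg^+_3 = \sg^+_4 = 0$.
\item For $\Gamma^-$: I want a spanning surface with $b_1 + 6 + e/2 = 2$. The natural candidate is Teragaito's minimal crosscap surface \cite{teragaito:torus}, with $b_1 = 2$ and boundary slope $12$, i.e.\ $e = -12$. This gives $\Gamma^- = 2 + 6 - 6 = 2$.
\end{itemize}
I would verify the slope of this surface directly, by realizing it as a state surface of the closed $3$-braid diagram $(\sigma_1\sigma_2)^4$ (or a modification of it) and computing the linking number with its push-off as in Figure~\ref{fig:trefoil-mobius}. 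If the slope turns out differently, I would search among the basic state surfaces of that diagram for one lying on the line $b = -6 - e/2 + 2$. This verification is the main computational obstacle. Granting it, $\sg^-_3 = \sg^-_4 = 2$, which proves~(2).

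For~(1), $\gamma_3(T(4,3)) = 2$ by Teragaito's computation. For $\gamma_4$ I use a single pinch move, i.e.\ a nonorientable band move, taking $T(4,3)$ to $T(2,1)$, which is the unknot. Capping off with a disk yields a M\"obius band filling, so $\gamma_4 = 1 < 2$. As a consistency check, Equation~\eqref{eq:e-wr-4d} should give this band $e = -10$, placing it on both boundary lines of the wedges ($\Gamma^+ = 0$, $\Gamma^- = 2$).

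For~(3), I interpret $\asg_3$ as Ito's $\sg_3 = \frac12(\sg^+_3 + \sg^-_3)$, which by~(2) equals $1$. Theorem~\ref{thm:turaev} gives $1 \leq g_T(T(4,3))$. For the reverse inequality, $T(4,3) = 8_{19}$ is non-alternating and has a diagram whose Turaev surface is a torus; the standard $8$-crossing diagram is almost alternating, or alternatively one can cite Lowrance's computation for closed $3$-braids \cite{lowrance:alt-dist}. Hence $g_T = 1$.
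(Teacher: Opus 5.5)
Your proposal is correct and is essentially the paper's own argument. It uses the same values $\sigma=-6$ and $\upsilon=-2$, the lower bounds of Proposition~\ref{prop:oss-sg}, the Seifert surface and the pinch surface $F_3(4,3)$ with $(e,b_1)=(-12,2)$ as upper-bound witnesses, the single pinch move for $\gamma_4\le 1$, and a citation for $g_T=1$. The only differences are organizational. You bound $\sg^\pm_4$ above by $\sg^\pm_3$, whereas the paper evaluates $\Gamma^\pm$ on $F_4(4,3)$. The paper also takes $e(F_3(4,3))=-12$ from \cite[Lemma 2.3]{non-ori-torus} rather than checking it by hand.

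Two details in your plan need fixing, though neither affects the argument.

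First, the hand verification of the slope will not work on the closed braid diagram of $(\sigma_1\sigma_2)^4$: its states have at most five circles, so none of its state surfaces has $b_1=2$. Use instead the checkerboard surface of the pretzel diagram $P(3,3,-2)$, which consists of two disks joined by the three twist columns. Only the two $3$-columns, whose strands run parallel, contribute to the slope, each crossing contributing $2$, which gives slope $12$.

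Second, that pretzel diagram is not itself almost alternating: its four non-alternating edges do not all meet a single crossing. Its Turaev genus is nevertheless $1$, which is all you need for $g_T\le 1$.
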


\begin{proof}
	The first part of the proposition follows from the computation that $\gamma_3(T(4,3)) = 2$ and $\gamma_4(T(4,3))=1$; see \cite[Example 5.3]{jvc:3vs4} or the entry for $8_{19} = T(4,3)$ in \cite{knotinfo}.  These values are realized using the $3$- and $4$-dimensional pinch surfaces $F_*(4,3)$ defined in \cite{jvc:3vs4} and \cite{batson:non-ori-slice}, respectively.  Using \cite[Lemma 2.3]{non-ori-torus}, we may compute that 
	\begin{align*}
		e(F_3(4,3)) &= -12 & e(F_4(4,3)) &= -10 \\
		b_1(F_3(4,3)) &= 2 & b_1(F_4(4,3)) &= 1.
	\end{align*}
	
	The proof of the second part starts with the computations $\sigma(T(4,3)) = -6$ and $\upsilon(T(4,3)) = -2$ using techniques from \cite{jvc:non-ori-milnor}. Thus, the bounds in Proposition~\ref{prop:oss-sg} are sharp at the pinch surface $F_4(4,3)$, and hence $\sg^+_4(T(4,3)) = 0$ and $\sg^-_4(T(4,3)) = 2$.  The computations for the second part of the proof are summarized in terms of geography in Figure~\ref{fig:t-4-3-geography}.
	
	Turning to spanning surfaces of $T(4,3)$ in $S^3$, on the positive side, the minimal Seifert surface $S$ has $e(S) = 0$ and $b_1(S) = 6$, and hence $\Gamma_3^+(\twist_+(S)) = 0$.  It follows that $\sg^+_3(T(4,3)) = 0$. On the negative side, the pinch surface $F_3(4,3)$ has first Betti number $2$ and normal Euler number $-12$, and hence has $\Gamma^-_3(F_3(4,3)) = 2$.  As the lower bounds in Proposition~\ref{prop:oss-sg} also apply to $\sg^\pm_3$, it follows that $\sg^-_3(T(4,3)) = 2$ as well.  Once again, see Figure~\ref{fig:t-4-3-geography}.
	
\begin{figure}
\labellist
\small\hair 2pt
 \pinlabel {$R_4(T(4,3))$} [t] at 93 5
 \pinlabel {$R_3(T(4,3))$} [t] at 277 5
 \pinlabel {$-10$} [t] at 40 17
 \pinlabel {$-12$} [t] at 207 17
 \pinlabel {$2$} [t] at 329 17
 \pinlabel {$1$} [r] at 124 30
 \pinlabel {$2$} [r] at 310 39
 \pinlabel {$6$} [r] at 310 83
 \pinlabel {$F_3$} [tr] at 205 40
 \pinlabel {$\twist_+(S)$} [tl] at 334 83
 \pinlabel {$F_4$} [r] at 33 30
 \pinlabel {$e$} [l] at 174 21
 \pinlabel {$b$} [b] at 127 111
 \pinlabel {$e$} [l] at 357 21
 \pinlabel {$b$} [b] at 312 111
\endlabellist
\begin{center}
\includegraphics{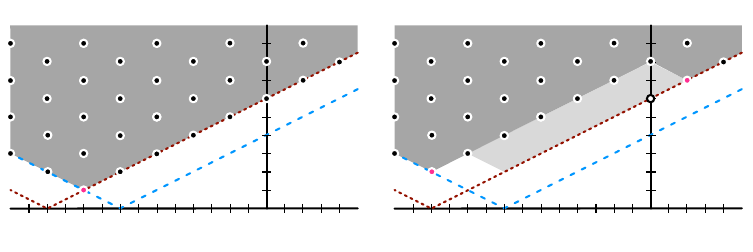}
\caption{The $4$-dimensional geography of the torus knot $T(4,3)$ is precisely $W_{F_4(p,q)}$, while the $3$-dimensional geography contains both $W_{F_3(p,q)}$ and $W_{\twist_+(S)}$ (and may contain more).}
\label{fig:t-4-3-geography}
\end{center}
\end{figure}

	Finally, we read off the computation $g_T(T(4,3)) = 1$ from \cite{ak:de-alt} or \cite{lowrance:kh-width}, while the work above yields $\asg_3(T(4,3)) = 1$.
\end{proof}

The proof of the first part of the proposition shows that $T(4,3)$ has a gap of $1$ between its non-orientable $3$- and $4$-genera.  In fact, Jabuka and Van Cott \cite{jvc:3vs4} prove that the gap can be arbitrarily large on torus knots:
\[(\gamma_3(T(qk+1,q)) - \gamma_4(T(qk+1,q)) \geq \frac{k}{2}.\] 

The torus knot $T(4,3)$ is also the first non-trivial example in the family of torus knots $T(2k,2k-1)$.  Batson \cite{batson:non-ori-slice} showed that this family has unbounded $\gamma_4$, with $\gamma_4(T(2k,2k-1))=k-1$.  Further, all elements in the family have $R_4 = W_\sigma \cap W_\upsilon$ \cite{non-ori-torus}, which yields
\[\sg^-_4(T(2k,2k-1))=0 \quad \text{and} \quad \sg^+_4(T(2k,2k-1))=2k-2.\]
%Thus, we obtain the result in Corollary~\ref{cor:torus-turaev}:
%\begin{align*}
%	g_T(T(2k,2k-1)) &\geq \asg_3(T(2k,2k-1)) \\
%	&\geq \asg_4(T(2k,2k-1)) \\
%	&= k-1.
%\end{align*}

We note that for $T(2k,2k-1)$, it is not clear if $\sg^+_3 = \sg^+_4$; in fact, this seems unlikely to be the case, as the Seifert surface lies inside the wedge of the $3$-dimensional pinch surface for $k>1$.

%% file: gap.tex
%!TEX root = non-ori-3d-4d.tex

In this section, we provide a new family of examples for which the gap between the non-orientable $3$- and $4$-genera becomes arbitrarily large.  These examples, which are a family of pretzel knots, have several interesting features. First,  the new examples all bound M\"obius bands in $B^4$ even though the orientable $4$-genus becomes arbitrarily large.  Second, the gap between the Euler-normalized non-orientable $3$- and $4$-genera is bounded above by $2$, demonstrating that the spanning surface defect gaps are independent of ordinary non-orientable gaps.

The main result of this section is Proposition~\ref{prop:pretzel-gap}, which asserts that for any $n \geq 1$ and positive odd integers $k,r, p_1, \ldots, p_{n-1}$, the pretzel knot $P = P(-k,r,-r-1, p_1, -p_1, \ldots,p_{n-1}, -p_{n-1})$ satisfies
\begin{enumerate}
	\item $\gamma_4(P) = 1$,
	\item $\gamma_3(P) = 2n$, and 
	\item $g_4(P) \geq \frac{1}{2}(k - r - 2)$.
\end{enumerate}
Such a knot is illustrated in Figure~\ref{fig:candidateknot} for $p_i = 3$.

\begin{figure}
\labellist
\small\hair 2pt
 \pinlabel {$-k$} [r] at 13 58
 \pinlabel {$r$} [r] at 55 58
 \pinlabel {$-r-1$} [l] at 106 66
 \pinlabel {$p_{n-1}$} [r] at 153 50
 \pinlabel {$-p_{n-1}$} [l] at 206 58
\endlabellist

	\begin{center}
	\includegraphics{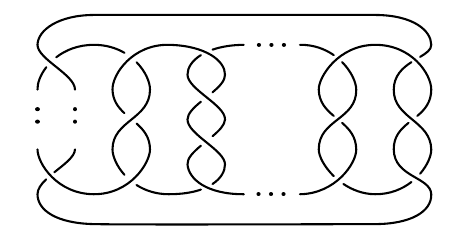}
	\end{center}
	\caption{The pretzel knot in Proposition~\ref{prop:pretzel-gap} with $r=3=p_i$.}
	\label{fig:candidateknot}
\end{figure}

\begin{proof}[Proof of Proposition~\ref{prop:pretzel-gap}]
	The fact that $\gamma_3(P) = 2n$ follows directly from the work of Ichihara and Mizushima \cite[Theorem 1.2]{im:pretzel}. 
	
To prove that $\gamma_4(P) = 1$, we begin by illustrating two types of band moves. A band move of Type I works for a pretzel of the form $P(q_1,...,q_{N-2},s,-s)$ for $s \in \zz$ ($s \neq 0$). We can perform this band move between the twists $q_{N-2}$ and $s$ to obtain the pretzel $P(q_1,...,q_{N-2})$ and an unlinked unknot; see Figure~\ref{fig:oddoddband}. These band moves are always orientable, and show that $P(q_1,...,q_{N-2})$ and $P(q_1,...,q_{N-2},s,-s)$ are concordant. 

\begin{figure}
\labellist
\small\hair 2pt
 \pinlabel {$\simeq$} [ ] at 221 64
\endlabellist
\begin{center}
	\includegraphics{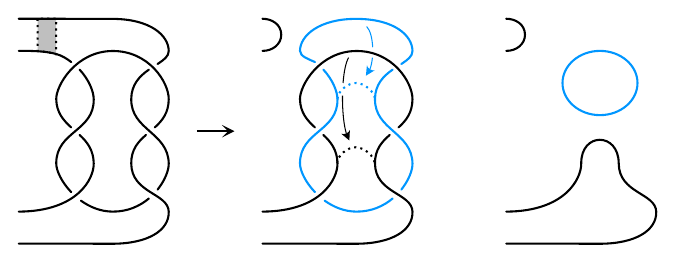}
	\caption{A band move of Type I with $s = 3$.}
	\label{fig:oddoddband}
\end{center}
\end{figure}

Next, consider a pretzel of the form $P(q_1,...,q_{N-2},s,-(s+1))$ for $s \in \zz$ ($s \neq 0, -1$).  A band move of Type II between $q_{N-2}$ and $s$, pictured in Figure~\ref{fig:evenoddband}, reduces the knot to $P(q_1,...,q_{N-2})$. Note that this band move may be non-orientable. 

\begin{figure}
\labellist
\small\hair 2pt
 \pinlabel {$\simeq$} [ ] at 221 64
\endlabellist
\begin{center}
	\includegraphics{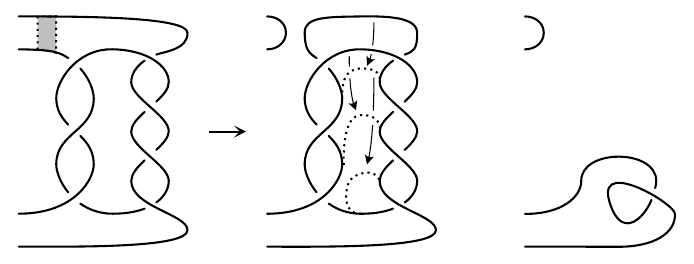}
	\caption{A band move of Type II with $s = 3$.}
	\label{fig:evenoddband}
\end{center}
\end{figure}

	We are now ready to prove that $\gamma_4(P) = 1$.  Applying Type I moves $n-1$ times, we see that $P$ is concordant to $P(-k, r, -r-1)$.  A further band move of Type II (which is non-orientable) transforms $P(-k, r, -r-1)$ into an unknot, and hence shows that $P$ is filled by a M\"obius band.  Note that a diagrammatic computation using Equation~\eqref{eq:e-wr-4d} implies that the normal Euler number of the surface $F$ constructed in this proof is 
\begin{equation} \label{eq:pretzel-e}
e(F) = 2(k-r-1).
\end{equation}

	It remains to be shown that $g_4(P) \geq \frac{1}{2}(k-r-2)$, which is achieved by calculating the signature of $P$.  Since $P$ is concordant to $P(-k, r, -r-1)$, it suffices to compute the signature of the latter knot, which can be achieved either directly using the Goeritz matrix as in \cite{gl:signature}, or using the formulae for signatures of pretzel knots in \cite{jabuka:witt} or \cite{shinohara:pretzel-signature} (correcting for sign conventions). The result is that:
	
\begin{equation} \label{eq:sgn-pretzel}
	\sigma(P) = \begin{cases}
		k-r & k < r(r+1), \\
		k-r-2 & k > r(r+1).
	\end{cases}
\end{equation}

	The lower bound for $g_4(P)$ now follows from  applying Murasugi's signature bound \cite{murasugi:signature}.
\end{proof}

As mentioned at the beginning of this section, despite the large gaps between $\gamma_3$ and $\gamma_4$ for the knots in Proposition~\ref{prop:pretzel-gap}, these examples do not exhibit similarly large gaps between $\sg^\pm_3$ and $\sg^\pm_4$. 

\begin{prop} \label{prop:pretzel-no-gap}
	The pretzel knots $P$ in Proposition~\ref{prop:pretzel-gap} satisfy
	\[\sg^\pm_3 (P) - \sg^\pm_4(P) \leq 2.\]
\end{prop}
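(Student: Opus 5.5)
The plan is to avoid computing $\sg^\pm_4(P)$ at all and instead bound $\sg^\pm_3(P)$ from above by $2$. Since $\Gamma^\pm(F) \geq 0$ for every filling $F$ by the Gordon-Litherland inequality (Theorem~\ref{thm:GL}), we have $\sg^\pm_4(P) \geq 0$. It therefore suffices to prove
\[\sg^+_3(P) \leq 2 \quad\text{and}\quad \sg^-_3(P) \leq 2.\]

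To get these upper bounds, I would use Ito's averaged defect together with Theorem~\ref{thm:turaev}. As noted in the introduction, $\sg_3(K) = \frac{1}{2}(\sg^+_3(K) + \sg^-_3(K))$. Theorem~\ref{thm:turaev} gives $\sg_3(P) \leq g_T(P)$, so
\[\sg^+_3(P) + \sg^-_3(P) \leq 2 g_T(P).\]
Both summands on the left are non-negative, so each is at most $2g_T(P)$. The remaining input is $g_T(P) \leq 1$.

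For that input, I would cite the fact, recalled in the introduction, that pretzel knots have Turaev genus at most $1$; non-alternating ones have Turaev genus exactly $1$ by \cite{ak:alternating}. To keep the argument self-contained, I would also check it directly on the standard pretzel diagram $D$ using
\[g_T(D) = \tfrac{1}{2}\bigl(c(D) + 2 - s_A(D) - s_B(D)\bigr),\]
where $c(D)$ is the crossing number and $s_A(D)$, $s_B(D)$ count the circles in the all-$A$ and all-$B$ states. Inside each twist region, one of the two resolutions produces $|q_i| - 1$ small circles. The other resolution joins the strands into arcs that contribute to one large circle shared by all tangles. A count then gives $s_A + s_B = c(D)$ whenever the tangles have mixed signs, so $g_T(D) = 1$.

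Combining the steps gives $\sg^\pm_3(P) \leq 2 \leq 2 + \sg^\pm_4(P)$, which is the claim.

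The main obstacle is the Turaev genus step. The state-circle count must be done with care: the parities of the $q_i$ matter, and the tangle $-r-1$ is even, so I need to confirm that the large circles in each all-$A$ and all-$B$ state number exactly one. If that bookkeeping causes trouble, the fallback is to exhibit the two state surfaces of $D$ directly. I would show that the all-$A$ state surface $F_A$ has $\Gamma^+(F_A) \leq 2$ and the all-$B$ state surface $F_B$ has $\Gamma^-(F_B) \leq 2$. To compute these, I would use Proposition~\ref{prop:gl-sign-sign-euler} to express $\sigma_{gl}$ through $\sigma(P)$ from Equation~\eqref{eq:sgn-pretzel} and the normal Euler numbers read off from the diagram.
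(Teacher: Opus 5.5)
Your proof is correct, but it takes a different route from the paper's. The paper does not use Turaev genus here. It takes the two state surfaces $F_A$, $F_B$ of the standard pretzel diagram, which is your fallback up to which smoothing is labelled $A$. It reads off $e$ and $b_1$ from the diagram and uses the signature formula \eqref{eq:sgn-pretzel} to get $\Gamma^-(F_A),\Gamma^+(F_B)\in\{0,2\}$.

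You instead feed Theorem~\ref{thm:turaev} and $g_T(P)\le 1$ into Ito's averaged defect $\sg_3=\frac{1}{2}(\sg^+_3+\sg^-_3)$. What your route buys:
\begin{itemize}
\item It is shorter and needs no signature computation.
\item It proves $\sg^+_3(P)+\sg^-_3(P)\le 2$, which is more than the proposition asks, and it does so for every knot with $g_T\le 1$.
\item Since $\Gamma^\pm$ is always even ($\sigma(K)$ is even and $e(F)/2\equiv b_1(F) \pmod 2$), this bound already forces one of $\sg^\pm_3(P)$ to vanish.
\end{itemize}
The cost is that it rests on the Ito--Lowrance machinery as a black box. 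The paper's computation is elementary once the signature is known, and it tells you \emph{which} defect vanishes: the $+$ one when $k<r(r+1)$ and the $-$ one when $k>r(r+1)$. It also supplies explicit surfaces to compare with the M\"obius band filling in the conjectured equality $\sg^\pm_3(P)=\sg^\pm_4(P)$.

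Your self-contained check of $g_T(D)=1$ needs one correction. The large (non-twist-region) circles do not number exactly one in each state. If they did, you would get $s_A+s_B=c(D)-N+2$ for $N$ tangles, hence $g_T(D)=N/2$, which contradicts your own claim.

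The correct count is as follows. When both smoothings occur, the number of large circles in a state equals the number of twist regions smoothed along the strands, that is, the regions that create no small circles. Each region is smoothed that way in exactly one of the two states, so the large circles total $N$ across both states. Adding the $\sum_i(|q_i|-1)$ small circles gives $s_A+s_B=c(D)$, as you asserted.

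For your $P$, the two states have $n+1$ and $n$ large circles respectively. The parities of the $q_i$ play no role. In any case, the bound $g_T\le 1$ for pretzel knots is citable, as the paper itself does in the introduction.
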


We conjecture that $\sg^\pm_3 (P) - \sg^\pm_4(P) = 0$.  The conjecture would hold if the M\"obius band filling constructed in the proof of Proposition~\ref{prop:pretzel-gap} were the unique such band.

\begin{proof}
	Let $F_A$ (resp.\ $F_B$) be the surfaces for $P$ depicted in Figure~\ref{fig:pretzel-state}. Direct computations on the diagrams yield
\begin{equation*}
	e(F_A) = 2k+\sum_{i=1}^{n-1} p_i \quad \text{and} \quad
	e(F_B) = -4r-2-\sum_{i=1}^{n-1} p_i,
\end{equation*}
as well as 
\begin{equation*}
	b_1(F_A) = 2+r + \sum_{i=1}^{n-1} p_i \quad \text{and} \quad
	b_1(F_B) = 1+r+k + \sum_{i=1}^{n-1} p_i.
\end{equation*}

\begin{figure}
\begin{center}
	\includegraphics{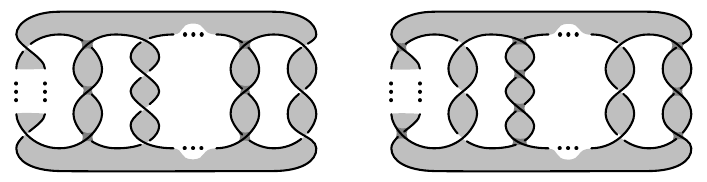}
\end{center}
\caption{The surfaces $F_A$ (left) and $F_B$ (right) used in the proof of Proposition~\ref{prop:pretzel-no-gap}.}
\label{fig:pretzel-state}
\end{figure}

Putting these computations together with the formula for the signature of $P$ in Equation~\eqref{eq:sgn-pretzel} yields
\begin{align*}
	\Gamma^-(F_A) &= \begin{cases} 2 & k < r(r+1) \\ 0 & k > r(r+1) \end{cases}\\
	\Gamma^+(F_B) &= \begin{cases} 0 & k < r(r+1) \\ 2 & k > r(r+1) \end{cases}
\end{align*}
The proposition follows.
\end{proof}

%% file: stable-gap-pretzel.tex
%!TEX root = non-ori-3d-4d.tex

In this section, we prove Proposition~\ref{prop:pretzelside} by exhibiting an infinite family of non-alternating slice pretzel knots with negative Euler-normalized non-orientable $3$-genus $2$.  The techniques deployed in the proof are interesting more for their suggestion of what work would be necessary for the computation of the spanning surface defect of adequate knots and the proof of Conjecture~\ref{conj:adequate-turaev} than for the result itself which, as noted above, would also follow from Theorems~\ref{thm:alternating} and \ref{thm:turaev}. The idea is reminiscent of the computation of the $3$-dimensional geography of the trefoil, but instead of using Adams and Kindred's idea that basic state surfaces ``generate'' the geography of alternating knots, we use instead the constraint on the geography coming from essential surfaces described in Proposition~\ref{prop:incompr-constraint}.  That is, for pretzel knots, essential surfaces play the role that basic state surfaces play for alternating knots.  This means that we need to understand all possible essential surfaces in the complement of a pretzel knot.

% *****
\subsection{Essential Surfaces and Edgepath Systems}
\label{ssec:ho}

As a first step in the proof of Proposition~\ref{prop:pretzelside}, we describe the work of Hatcher and Oertel \cite{ho} on essential surfaces in the exteriors of Montesinos knots, specializing our description to the pretzel knots $P_n = P(-3,3,n)$, with $n \geq 3$ odd, as in \cite{im:montesinos-slope-bound} or \cite{lvdv:pretzel-slope}, when appropriate.

The central idea is that every essential surface belongs to a set $\candidate$ of candidate surfaces that is constructed as follows. Suppose that $K$ is a Montesinos knot composed of $N$ rational tangles. Divide $S^3$ into $N$ copies of $B^3$ that all meet along a common $S^1$ axis, with the $k^{th}$ $3$-ball containing the $k^{th}$ tangle; see Figure~\ref{fig:divide-S3}.  We produce candidate surfaces in $\candidate$ by constructing subsurfaces that lie in the $3$-balls and then ensuring that those subsurfaces match up along the common boundaries.  The subsurfaces themselves are described by a sequence of saddles that is related to the twists that make up the rational tangles.  

\begin{figure}
\labellist
\small\hair 2pt
 \pinlabel {$B_2$} [ ] at 99 90
 \pinlabel {$B_3$} [ ] at 112 132
 \pinlabel {$B_1$} [ ] at 44 22
\endlabellist
\begin{center}
\includegraphics{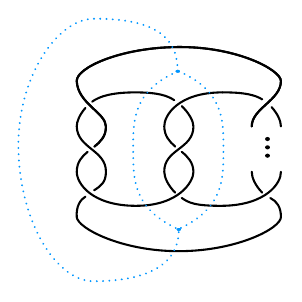}
\caption{Divide $S^3$ into $N$ copies of $B^3$, with each tangle of $K$ contained in its own $3$-ball $B_i$.  The two dots where the three $B_i$ meet represent the points where the common $S^1$ axis meets the projection $2$-sphere.}
\label{fig:divide-S3}
\end{center}
\end{figure}

One of the insights of Hatcher and Oertel is that these subsurfaces can be described and analyzed using combinatorial data.  The first step in the combinatorial description of a subsurface in a tangle is to record it as a path in the edgepath diagram $\mathcal{D} \subset \rr^2$, with coordinates on $\rr^2$ denoted $(u,v)$.  The diagram $\mathcal{D}$ consists of a collection of line segments with disjoint interiors that connect vertices of the form $\langle p/q\rangle = (1-1/q, p/q)$,  $\langle p/q\rangle^{\circ} = (1, p/q)$, and $\langle 1/0 \rangle = (-1,0)$  for $q>1$, $p \in \zz$ and $p$ and $q$ relatively prime.  There is a line segment joining $\langle p/q\rangle$ and $\langle r/s\rangle$ exactly when $ps - rq = \pm 1$; we denote such a line segment by $\langle p/q \rangle - \langle r/s \rangle$.  The diagram $\mathcal{D}$ also contains the horizontal segments  $\langle p/q \rangle - \langle p/q \rangle^{\circ}$, and all segments $\langle 1/0 \rangle - \langle p/1 \rangle$; see Figure~\ref{fig:edgepath-diagram}.

\begin{figure}
\labellist
\small\hair 2pt
 \pinlabel {$u$} [l] at 122 109
 \pinlabel {$v$} [v] at 63 213
\footnotesize\hair 2pt
 \pinlabel {$\langle \frac{1}{0} \rangle$} [br] at 21 112
 \pinlabel {$\langle 1 \rangle$} [br] at 62 152
 \pinlabel {$\langle 2 \rangle$} [br] at 62 193
 \pinlabel {$\langle 0 \rangle$} [br] at 62 112
 \pinlabel {$\langle -1 \rangle$} [tr] at 62 67
 \pinlabel {$\langle -2 \rangle$} [tr] at 62 23
 \pinlabel {$\langle 0 \rangle$} [tr] at 166 43
 \pinlabel {$\langle 1 \rangle$} [br] at 166 176
 \pinlabel {$\langle 0 \rangle^\circ$} [tl] at 299 43
 \pinlabel {$\langle 1 \rangle^\circ$} [bl] at 299 176
 \pinlabel {$\langle \frac{1}{2} \rangle^\circ$} [l] at 299 109
 \pinlabel {$\langle \frac{1}{3} \rangle^\circ$} [l] at 299 88
 \pinlabel {$\langle \frac{1}{2} \rangle$} [r] at 227 109
 \pinlabel {$\langle \frac{1}{3} \rangle$} [r] at 248 89
% \pinlabel {$q$} [ ] at 256 59
 \pinlabel {$\langle \frac{1}{5} \rangle$} [t] at 270 66
\endlabellist
\begin{center}
\includegraphics{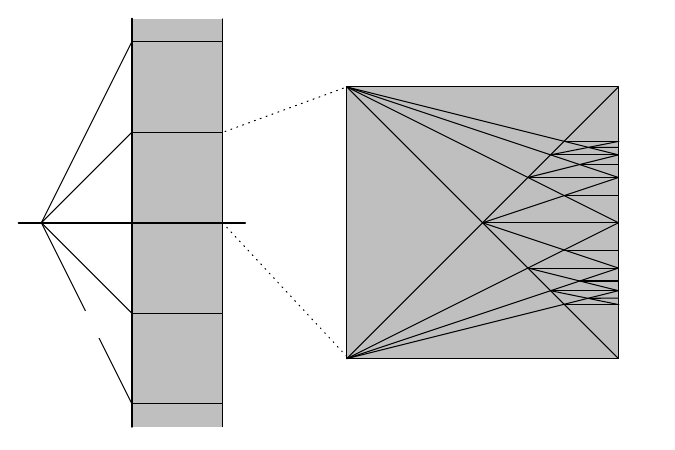}
\caption{The edgepath diagram $\mathcal{D}$ with the part of the diagram in $[0,1] \times [0,1]$ expanded.}
\label{fig:edgepath-diagram}
\end{center}
\end{figure}

A \dfn{basic edgepath} $\bep$ describes a sequence of saddles in a tangle which must satisfy the following properties:

\begin{enumerate}
\item The path $\bep$ begins on the edge $\langle p/q\rangle-\langle p/q \rangle^{\circ}$, and is constant if the starting point is not $\langle p/q \rangle$.

\item The path $\bep$ proceeds monotonically from right to left, with movement along vertical edges permitted.

\item The path $\bep$ neither retraces itself nor travels along two edges of a triangle in $\mathcal{D}$ in succession.  
\end{enumerate}

As noted in \cite{im:pretzel} or \cite{lvdv:pretzel-slope}, it is straightforward to see that, for a pretzel knot, a basic edgepath takes one of the following two forms:
\begin{enumerate}
\item $\sigma_{\pm p} = \langle 0 \rangle -\langle \pm 1/p\rangle$
\item $ \rho_{\pm p} =  \langle \pm 1 \rangle -\langle \pm 1/2 \rangle - \dots - \langle \pm 1/(|p|-1) \rangle - \langle \pm 1/|p|\rangle$
\end{enumerate}
See Figure~\ref{fig:basic-edgepath-surfaces} for illustrations of surfaces corresponding to several basic edgepaths and their extensions to include the point $\langle 1/0 \rangle$. Note that an edgepath does not necessarily determine a unique subsurface, but it does uniquely determine the first Betti number and normal Euler number of a subsurface.

\begin{figure}
\labellist
\small\hair 2pt
 \pinlabel {$\sigma_3$} [c] at 58 6
 \pinlabel {$\langle 1/0 \rangle - \sigma_3$} [c] at 181 6
 \pinlabel {$\langle 1/0 \rangle - \rho_3$} [c] at 303 6
\endlabellist
\begin{center}
\includegraphics{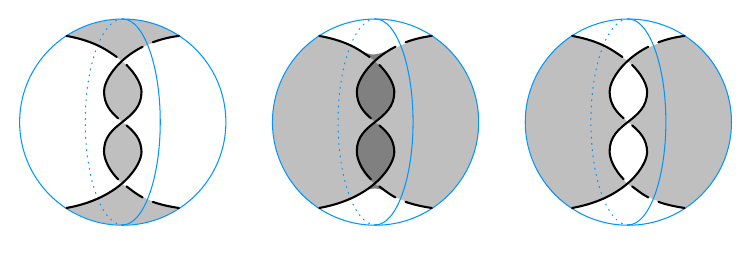}
\caption{Examples of subsurfaces in a tangle corresponding to the edgepaths $\sigma_3$, $\langle 1/0 \rangle - \sigma_3$, and $\langle 1/0 \rangle - \rho_3$.  The longitudes represent the common axis of the $3$-balls.}
\label{fig:basic-edgepath-surfaces}
\end{center}
\end{figure}

A \dfn{basic edgepath system} is simply a collection of basic edgepaths, one for each tangle.  There are eight possible basic edgepath systems for the pretzel knot $P_n$:
\begin{align*}
\bep_0&: \{\sigma_{-3}, \sigma_3, \sigma_n\} & \bep_4&: \{\rho_{-3}, \sigma_3, \sigma_n\} \\
\bep_1&: \{\sigma_{-3}, \sigma_3, \rho_n\} & \bep_5&: \{\rho_{-3}, \sigma_3, \rho_n\} \\
\bep_2&: \{\sigma_{-3}, \rho_3, \sigma_n\} & \bep_6&: \{\rho_{-3}, \rho_3, \sigma_n\} \\
\bep_3&: \{\sigma_{-3}, \rho_3, \rho_n\} & 
\bep_7&: \{\rho_{-3}, \rho_3, \rho_n \}
\end{align*}

In order to ensure that the edgepaths correspond to subsurfaces that match along their boundaries, we  require that their (left) endpoints have the same $u$ coordinate, and at that $u$ value, the $v$ coordinates sum to zero.  In order to achieve this condition, we must extend or truncate basic edgepath systems in one of the following three ways:

\begin{description}
\item[Type I] Type I edgepath systems are formed by first extending basic edgepaths by appending an edge $\langle 1/p\rangle-\langle 1/p\rangle^{\circ}$. The extended system, which we shall continue to call $\bep = \{\bep_1, \ldots, \bep_k\}$, can then be considered as a function $[0,1] \to \rr$ defined by $\bep(u) = \sum^k_{i=1} \bep_i(u)$.  Solve the equation $\bep(u) = 0$.  For each solution $u_0$, a new Type I edgepath system $\ep$ with edgepaths $\ep_i$ is constructed according to the following algorithm: if $u_0 \leq (|p_i|-1)/|p_i|$, then let \[\ep_i = \bep_i \cap \{(u,v): u \geq u_0 \}.\]
This procedure may cut an edge in its interior.  Denote such a partial edge on $\langle p/q \rangle - \langle r/s \rangle$ by
\[\left(\frac{k}{k+l}\langle p/q \rangle - \frac{l}{k+l} \langle r/s \rangle \right) - \langle r/s \rangle,\]
where $k+l$ is the number of sheets of the surface, and
\[u_0 = \frac{k(q-1)+ l(s-1)}{kq+ls}.\]
See \cite[p.\ 455]{ho}.

\item[Type II] Add vertical edges to edgepaths in $\bep$ so that the $v$-coordinates of the endpoints sum to 0 when $u=0$, while still adhering to the rule that an edgepath must not trace two sides of the same triangle. Only the minimal number of vertical edges need be added, as any additional pairs of edges will raise the first Betti number without changing the boundary slope.

\item[Type III] Complete each basic edgepath with a segment connecting the left endpoint to $\langle 1/0 \rangle$.
\end{description}
The result of any one of these operations is an \dfn{edgepath system} $\ep$.

We are now in a position to enumerate all possible edgepath systems that describe surfaces in the candidate set $\candidate$ for the pretzel knot $P_n$.  According to the proof of Lemma 3.7 in \cite{im:pretzel}, there is a unique Type I edgepath system of the form 
\[\ep^I = \left\{ 
\begin{array}{l}
\left(\frac{4}{n+1} \left\langle -\frac{1}{2} \right\rangle + \frac{n-3}{n+1} \left\langle -\frac{1}{3} \right\rangle\right)-\left\langle-\frac{1}{3}\right\rangle, \\
\left(\frac{2}{n+1}\left\langle0\right\rangle+\frac{n-1}{n+1}\left\langle\frac{1}{3}\right\rangle\right)-\left\langle\frac{1}{3}\right\rangle, \\
\left(\frac{n-1}{n+1}\left\langle0\right\rangle+\frac{2}{n+1}\left\langle\frac{1}{n}\right\rangle\right)-\left\langle\frac{1}{n}\right\rangle
\end{array}
\right\}\]
See Figure~\ref{fig:edgepath-systems}(a) for an illustration of this system.

\begin{figure}
\labellist
\small\hair 2pt
 \pinlabel {$u$} [l] at 87 79
 \pinlabel {$v$} [b] at 16 150
 \pinlabel {$u$} [l] at 193 79
 \pinlabel {$v$} [b] at 120 150
 \pinlabel {$u$} [l] at 359 79
 \pinlabel {$v$} [b] at 286 150
 \pinlabel {$\frac{4}{7}$} [b] at 50 150
 \pinlabel {(a)} [t] at 44 5
 \pinlabel {(b)} [t] at 151 5
 \pinlabel {(c)} [t] at 286 5
\endlabellist

\begin{center}
\includegraphics{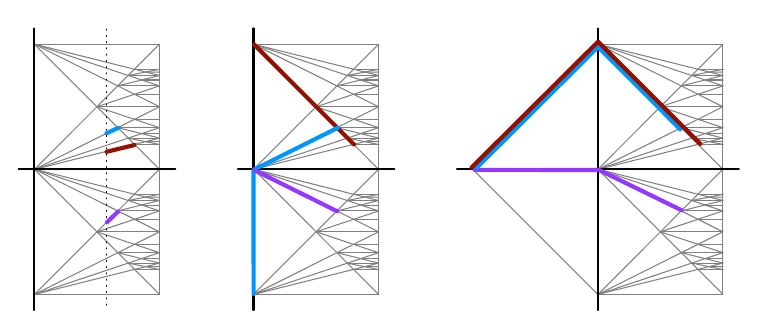}
\caption{(a) The edgepath system $\ep^I$, (b) the edgepath system $\ep^{II}_1$, and (c) the edgepath system $\ep^{III}_3$, all with $n=5$.}
\label{fig:edgepath-systems}
\end{center}
\end{figure}

To specify Type II edgepath systems, we use a superscript $+$ (resp.\ $-$) to denote the addition of an upward-pointing (resp.\ downward-pointing) vertical edge.  In some cases, there are choices about where to attach a vertical edge, but such choices will be immaterial to later computations of the first Betti number and the normal Euler number. Up to such choices, the Type II edgepath systems are as follows:

\begin{align*}
\ep^{II}_0&: \{\sigma_{-3}, \sigma_3, \sigma_n\} & \ep^{II}_4&: \{\rho_{-3}, \sigma_3^+, \sigma_n\} \\
\ep^{II}_1&: \{\sigma_{-3}, \sigma_3^-, \rho_n\} & \ep^{II}_5&: \{\rho_{-3}, \sigma_3, \rho_n\} \\
\ep^{II}_2&: \{\sigma_{-3}, \rho_3, \sigma_n^-\} & \ep^{II}_6&: \{\rho_{-3}, \rho_3, \sigma_n\} \\
\ep^{II}_3&: \{\sigma_{-3}^{--}, \rho_3, \rho_n\} & 
\ep^{II}_7&: \{\rho_{-3}^-, \rho_3, \rho_n \}
\end{align*}
The edgepath system $\ep^{II}_0$ corresponds to the Seifert surface of $P_n$.  See Figure~\ref{fig:edgepath-systems}(b) for an illustration of a different Type II system, and see Figure~\ref{fig:pretzel-surfaces}(a) for an illustration of the Seifert surface.

\begin{figure}
\labellist
\small\hair 2pt
 \pinlabel {(a)} [t] at 75 145
 \pinlabel {(b)} [t] at 227 145
 \pinlabel {(c)} [t] at 75 9
 \pinlabel {(d)} [t] at 227 9
\endlabellist
\begin{center}
\includegraphics{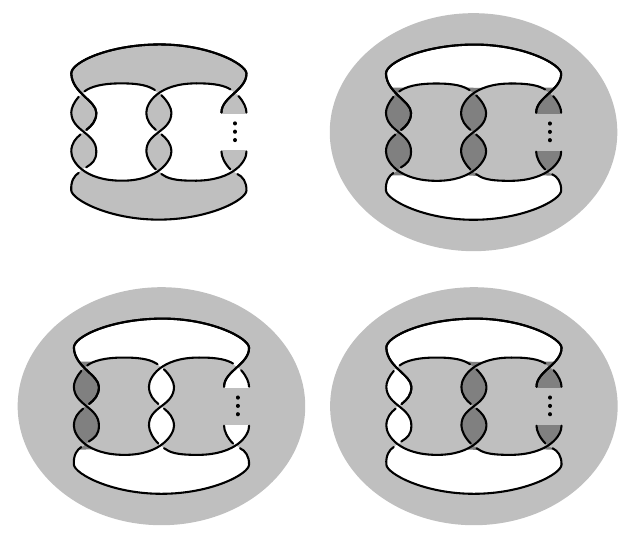}
\caption{Spanning surfaces corresponding to several edgepath systems: (a) the Seifert surface $\ep^{II}_0$, (b) the surface $\ep^{III}_0$, (c) the surface $\ep^{III}_3$, and (d) the surface $\ep^{III}_4$.}
\label{fig:pretzel-surfaces}
\end{center}
\end{figure}

Finally, Type III edgepath systems build directly off of the basic edgepath systems; we denote them by $\ep^{III}_k$ for $k =0, \ldots, 7$. See Figure~\ref{fig:edgepath-systems}(c) for one example of a Type III edgepath system, and see Figure~\ref{fig:pretzel-surfaces}(b,c,d) for illustrations of surfaces that correspond to Type III edgepath systems.

% *****
\subsection{Computations on Edgepath Systems}
\label{ssec:ho-comp}

Now that we know how to construct edgepath systems that describe candidate surfaces, we proceed to describe how to compute the first Betti number and the normal Euler number of those candidates from their edgepath systems.  We continue to follow \cite{ho, im:montesinos-slope-bound,lvdv:pretzel-slope}.

We first define several quantities related to an edge in an edgepath.  
\begin{itemize}
\item The \dfn{length} $|e|$ of an edge $e$ is $1$ for a complete edge between two vertices, and is $\frac{k}{k+l}$ for a partial edge $\left(\frac{k}{k+l}\langle p/q \rangle + \frac{l}{k+l}\langle r/s \rangle\right) - \langle r/s \rangle$. The length $| \ep|$ of an edgepath system is the sum of the lengths of its constituent edges that lie in the half-plane $u \geq 0$.

\item The \dfn{sign} $\varepsilon(e)$ of an edge $e$ is $+1$ if the the edge increases in height from right to left, or is an upward-pointing vertical edge, and is $-1$ if the edge decreases in height from right to left, or is a downward-pointing vertical edge.  The sign of the edge $e = \langle p/q \rangle - \langle r/s \rangle$ can also be computed as $\varepsilon(e) = ps-qr$.

\item The \dfn{twist} $\tau(e)$ of an edge $e$ is $\tau(e) = -2\varepsilon(e)|e|$ if $e$ lies in the right half-plane $u \geq 0$, and $\tau(e) = 0$ if $e$ connects an integer point $\langle n/1 \rangle$ to $\langle 1/0 \rangle$.  The twist $\tau(\ep)$ of an edgepath system is the sum of the twists of its constituent edges; as with the length, this is equivalent to the sum of the twists of its constituent edges that lie in the half-plane $u \geq 0$.
\end{itemize}

The computation of the Euler characteristic of a surface corresponding to an edgepath system depends on its type, and further, on the number of sheets $\#\ep$ of the surface described by the edgepath system.  For Type II and Type III surfaces, we may assume for our purposes that the number of sheets is always one, but the Type I surfaces can have multiple sheets.
\begin{description}
\item[Type I] For a Type I edgepath system with $N$ subsurfaces, $\#\ep$ sheets, and left endpoint at $u_0$, we have
\[\frac{\chi(\ep)}{\#\ep} = -|\ep| + N - \frac{1}{1-u_0}(N-2).\]

\item[Type II] For a Type II edgepath system, we have
\[\chi(\ep) = 2-|\ep|.\]

\item[Type III] For a Type III edgepath system, we have
\[\chi(\ep) = -|\ep|.\]

\end{description}

For the Type I edgepath system, we may calculate that $u_0 = \frac{2n-2}{3n-1}$ and that the associated surface has $\frac{n+1}{2}$ sheets; the division by $2$ arises since the fractions in $\ep^I$ have even numerators and denominators since $n$ is odd, so the denominator in lowest terms is $\frac{n+1}{2}$. Further, the three edgepaths have lengths $\frac{4}{n+1}$, $\frac{2}{n+1}$, and $\frac{n-1}{n+1}$. The formula above then yields $\chi(\ep^I) = -\frac{n+1}{2}$, so $b_1(\ep^I) = \frac{n+3}{2}$. The calculations of the first Betti numbers of the Type II and Type III appear in the third columns of Table~\ref{tab:compute}(a) and (b), respectively.

\begin{table}
\renewcommand*\arraystretch{1.5}
\begin{tabular}{|c| c c | c c|} 
 \hline
 $\ep^{II}_k$ & $e$ & $b_1$ & $\Gamma^+$ & $\Gamma^-$ \\
 \hline
 (0) & $0$ & $2$ & $2$ & \fbox{$2$} \\
 (1) & $2n-2$ & $n+1$ & $2$ & $2n$ \\
 (2) & $4$ & $4$ & $2$ & $6$ \\
 (3) & $2n+2$ & $n+3$ & $2$ & $2n+4$ \\
 (4) & $-4$ & $4$ & $6$ & \fbox{$2$} \\
 (5) & $2n-6$ & $n+1$ & $4$ & $2n-2$ \\
 (6) & $0$ & $4$ & $4$ & $4$ \\
 (7) & $2n-2$ & $n+3$ & $4$ & $2n+2$ \\ \hline
\end{tabular}
\quad 
\begin{tabular}{|c| c c | c c|} 
 \hline
 $\ep^{III}_k$ & $e$ & $b_1$ & $\Gamma^+$ & $\Gamma^-$  \\
 \hline
  (0) & $0$ & $4$ & $4$ & $4$ \\
 (1) & $2n$ & $n+2$ & $2$ & $2n+2$ \\
 (2) & $6$ & $5$ & $2$ & $8$ \\
 (3) & $2n+6$ & $n+3$ & \fbox{$0$} & $2n+6$\\
 (4) & $-6$ & $5$ & $8$ & \fbox{$2$} \\
 (5) & $2n-6$ & $n+3$ & $6$ & $2n$ \\
 (6) & $0$ & $6$ & $6$ & $6$ \\
 (7) & $2n$ & $n+4$ & $4$ & $2n+4$ \\ \hline
\end{tabular}
\vspace{.1in}

\caption{First Betti numbers, normal Euler numbers, and $\Gamma$ computations for (a) Type II edgepath systems and (b) Type III edgepath systems. Minimal values of $\Gamma^\pm$ are \fbox{boxed}.}
\label{tab:compute}
\end{table}

The computation for the boundary slope is similar, though simpler.  Denote the edgepath system corresponding to the Seifert surface of $P_n$ by $\ep_S$.  For a surface corresponding to the edgepath $\ep$, we have
\[s(\ep) = \tau(\ep) - \tau(\ep_S),\]
and hence
\[e(\ep) = \tau(\ep_S) - \tau(\ep).\]

As noted above, the Seifert surface corresponds to $\ep^{II}_0$, which has twist $2$.  For the Type I edgepath system, all three edgepaths have negative signs, and hence we obtain $e(\ep^I) = -\frac{8}{n+1}$. See Table~\ref{tab:compute} once again for the normal Euler numbers of the Type II and Type III surfaces.

We now have enough information for the proof of Proposition~\ref{prop:pretzelside}.

\begin{proof}[Proof of Proposition~\ref{prop:pretzelside}] The point of Hatcher and Oertel's algorithm is that every essential surface corresponds to one of the Type I, II, or III edgepath systems.

The calculations above show that the Type I surface has $\Gamma^-(\ep^I) = \frac{n+2}{2}-\frac{4}{n+1}$.  For odd $n \geq 3$, we have $\Gamma^-(\ep^I) \geq \frac{3}{2}$. Thus, any spanning surface that boundary compresses to the surface corresponding to $\ep^I$ must have $\Gamma^-(F) \geq 2$.  Further, Table~\ref{tab:compute} shows that $\Gamma^-$ is at least $2$ for each of the Type II and Type III surfaces.  Thus, Corollary~\ref{cor:rational-bound} shows that $\sg_3^-(P_n) \geq 2$. On the other hand, the surface corresponding to $\ep^{III}_3$ yields $\sg_3^+(P_n) = 0$.
\end{proof}

\begin{ex}
The information generated for the proof of Proposition~\ref{prop:pretzelside} also yields, via Proposition~\ref{prop:incompr-constraint}, the geography of the knots $P_n$.  See Figure~\ref{fig:P333-geography3} for the geography of $P_3$. 
\end{ex}

\begin{figure}
\labellist
\small\hair 2pt
 \pinlabel {$-2$} [t] at 25 5
 \pinlabel {$2$} [t] at 63 5
 \pinlabel {$2$} [l] at 47 27
 \pinlabel {$6$} [l] at 47 64
 \pinlabel {$12$} [t] at 153 5
\endlabellist

\begin{center}
\includegraphics{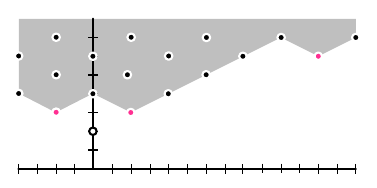}
\end{center}
\caption{The $3$-dimensional geography of the knot $P_3$ is generated by surface at $(12,6)$ and the positively and negatively twisted Seifert surface at $(\pm 2,3)$.  The Type I surface also lies at $(-2,3)$.}
\label{fig:P333-geography3}
\end{figure}

\begin{rem}
Hatcher and Oertel's algorithm goes further than just listing candidates the essential surfaces in the exterior of a Montesinos knot --- it can actually identify which of these are essential.  We chose not to use this part of Hatcher and Oertel's work since the combinatorics necessary to pick out the essential surfaces from the candidates were more involved than simply computing the normal Euler and Betti numbers for all of the candidates.  Nevertheless, computations using Dunfield's implementation of Hatcher and Oertel's algorithm \cite{dunfield:ho} indicates that the essential surfaces for $P_n$ correspond to the edgepath systems $\ep^I$, $\ep^{II}_0$ (the Seifert surface), $\ep^{III}_0$, and $\ep^{III}_3$.
\end{rem}

%% file: stable-gap-sum.tex
%!TEX root = non-ori-3d-4d.tex

We conclude by investigating the behavior of the $3$-dimensional spanning surface defect under connected sum. The proof of Theorem~\ref{thm:large-gap} will follow as an immediate corollary.

\begin{lem}\label{lem:connectedsum} 
	The $3$-dimensional spanning surface defects $\sg^\pm_3$ are each additive under connected sum, i.e.\ 
\[\sg^{\pm}_3(K_0\#K_1) = \sg^{\pm}_3(K_0) + \sg^{\pm}_3(K_1).\]
%	The stable non-orientable $4$-genera $\sg^\pm_4$ are subadditive under connected sum, i.e.\ 
%\[\sg^{\pm}_4(K_0\#K_1) \leq \sg^{\pm}_4(K_0) + \sg^{\pm}_4(K_1).\]
\end{lem}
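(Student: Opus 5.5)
The proof will have two inequalities. First I establish $\sg^\pm_3(K_0\#K_1)\le \sg^\pm_3(K_0)+\sg^\pm_3(K_1)$, which is the easy direction. Then I establish the reverse inequality, which is the real content.

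For subadditivity, take spanning surfaces $F_0,F_1$ realizing $\sg^\pm_3(K_0)$ and $\sg^\pm_3(K_1)$. I form the boundary connected sum $F=F_0\natural F_1$ along an arc in a separating sphere. Then $F$ spans $K_0\#K_1$ and $b_1(F)=b_1(F_0)+b_1(F_1)$. The pushoff of $K_0\#K_1$ along $F$ is the band sum of the pushoffs, so $\lk$ is additive and $e(F)=e(F_0)+e(F_1)$. The signature is additive as well, $\sigma(K_0\#K_1)=\sigma(K_0)+\sigma(K_1)$, so $\Gamma^\pm(F)=\Gamma^\pm(F_0)+\Gamma^\pm(F_1)$. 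Equivalently, the Gordon--Litherland form of $F$ is the orthogonal sum of the forms of $F_0$ and $F_1$. This gives the upper bound.

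For superadditivity, let $F$ be any spanning surface of $K=K_0\#K_1$. Let $S$ be a decomposing sphere meeting $K$ in two points, transverse to $F$. Then $S\cap F$ consists of a single arc $\alpha$ joining the two points of $K\cap S$, together with a collection of circles.
\begin{enumerate}
\item I first remove the circles by an innermost-disk argument. An innermost circle $c$ on $S\setminus\alpha$ bounds a disk $D\subset S$ whose interior misses $F$, so I compress $F$ along $D$.
\item If $c$ is essential in $F$, this compression lowers $b_1$ by at most $2$, or by $1$ if $c$ is one-sided-adjacent. If $c$ is inessential, I discard the resulting closed component.
\item A compression does not change the boundary slope, since the boundary is untouched and $e$ is computed from $\lk(K,K')$. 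The Betti number does not increase. Hence $\Gamma^\pm$ does not increase. If a closed component splits off, I discard it, which again lowers $b_1$ and leaves $e$ unchanged.
\end{enumerate}

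I must be careful that the result remains a connected spanning surface. The component containing $K$ is the one I keep, and discarding the other components only lowers $b_1$. After finitely many steps, $F\cap S=\alpha$. Cutting along $\alpha$ then writes $F=F_0\natural F_1$, with $F_i$ spanning $K_i$. By the additivity computation above, $\Gamma^\pm(F)\ge\Gamma^\pm(F_0)+\Gamma^\pm(F_1)\ge \sg^\pm_3(K_0)+\sg^\pm_3(K_1)$.

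I expect the main obstacle to be verifying that $e$ is unchanged under compression and that the resulting $F_i$ satisfy $e(F)=e(F_0)+e(F_1)$. The first point is clear because the boundary slope depends only on the surface near $K$. The second point is the key check: the pushoff $K'$ of $K$ along $F$ restricts to pushoffs of the $K_i$ along $F_i$, away from a small neighborhood of $\alpha$. A local computation at $\alpha$ shows no extra crossings between $K$ and $K'$ arise there, so $\lk(K,K')=\lk(K_0,K_0')+\lk(K_1,K_1')$.
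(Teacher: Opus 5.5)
Your proposal is correct and follows essentially the same route as the paper: you decompose a spanning surface of $K_0\#K_1$ as $F_0\natural F_1$ along a decomposing sphere and use additivity of $b_1$, $e$ and $\sigma$ under boundary connected sum. Your innermost-disk compressions spell out the ``standard arguments'' the paper cites from Fox, and your direct additivity of $\Gamma^\pm$ makes the paper's twisted-band matching of Euler numbers (and its reduction to non-orientable $F_i$) unnecessary. One slip is harmless: curves of $F\cap S$ are always two-sided in $F$ because $S$ is two-sided, so a compression never changes $b_1$ by $1$, and all you need is that $b_1$ of the component containing $K$ does not increase.
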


The proof of the lemma is, in essence, the same as the proof of subadditivity of the non-orientable $3$-genus in \cite[Theorem 2.8]{clark:crosscap} with two important differences:
\begin{enumerate}
\item The need to track the normal Euler number, and
\item The freedom to add twisted bands to move off of the orientable spanning surfaces that force subadditivity rather than additivity of the ordinary non-orientable $3$-genus.
\end{enumerate}

In the proof below, we use the standard notation that if $F_i$ is a spanning surface for $K_i$, then $F_0\natural F_1$ is the boundary connected sum of $F_0$ and $F_1$, with $\partial(F_0\natural F_1) = K_0 \# K_1$. 

\begin{proof}
We write the proof in the positive case; the negative case is analogous. Let $F$ be a non-orientable spanning surface for $K_0 \# K_1$ that realizes $\sg^+_3(K_0 \# K_1)$.  

Using standard arguments (see, for example, \cite[\S5]{fox:quick-trip}), we may decompose $F$ as $F_0 \natural F_1$, where $F_i$ is a spanning surface for $K_i$.  If $F$ realizes $\sg^+_3(K_0 \# K_1)$, then so does $\twist^k_+(F)$, the surface obtained from the addition of $k$ positive twisted bands. Thus, we may assume that the surfaces $F_i$ are non-orientable.  It suffices to prove that, for $i=0,1$, we have $\sg^+_3(K_i) = \Gamma^+(F_i)$.

Suppose, to the contrary, that there exists a spanning surface $F_0^*$ of $K_0$ so that 
\begin{equation} \label{eq:cs-contra}
	\Gamma^+(F_0^*) < \Gamma^+(F_0).
\end{equation}
If $e(F_0^*) \leq e(F_0)$, then add $k= e(F_0)-e(F_0^*)$ positive twisted bands to $F_0^*$, yielding a surface $\twist^k_+(F_0^*)$ with $e(\twist^k_+(F_0^*)) = e(F_0)$.  As adding twisted bands does not change $\Gamma^+$, Equation \eqref{eq:cs-contra} shows that $b_1(\twist^k_+(F_0^*)) < b_1(F_0)$.  It follows that $b_1(\twist^k_+(F_0^*) \natural F_1) < b_1(F)$. Since $\twist^k_+(F_0^*) \natural F_1$ and $F$ have the same normal Euler number, we see that $\Gamma^+(\twist^k_+(F_0^*) \natural F_1) < \Gamma^+(F)$.  This contradicts the assumption that $F$ realizes $\sg^+_3(K_0 \# K_1)$.

If, on the other hand, $e(F_0^*) > e(F_0)$, we add twisted bands to $F_0$ so that $e(\twist^k_+(F_0)) = e(F^*_0)$.  A parallel argument to the one above once again leads to the required contradiction.
\end{proof}

\begin{proof}[Proof of Theorem~\ref{thm:large-gap}]
As in Section~\ref{sec:stable-gap-pretzel}, consider the pretzel knot $P=P(-3,3,3)$.  We know that $\sg^-_3(P) = 2$, while $\sg^-_4(P) = 0$ since $P$ is slice.  Let $K_n = \#_n P$.  On one hand, Lemma~\ref{lem:connectedsum} shows that $\sg^-_3(K_n) = 2n$.  On the other, the knot $K_n$ is slice, so $\sg^-_4(K_n) = 0$.  The theorem follows.
\end{proof}